\documentclass[final,leqno]{siamart250211}
\usepackage{graphicx} 

\newcommand{\N}{\mathbb N}

\newcommand{\R}{\mathbb R}

\newcommand{\pa}{\partial}

\newcommand{\la}{\label}
\newcommand{\fr}{\frac}
\newcommand{\na}{\nabla}
\newcommand{\be}{\begin{equation}}
\newcommand{\ee}{\end{equation}}
\newcommand{\ba}{\begin{array}{l}}
\newcommand{\ea}{\end{array}}

\newcommand{\beg}{\begin}

\newcommand{\D}{\Delta}
\renewcommand{\l}{\Lambda_D}

\usepackage{cite}
\usepackage{mathtools}

\usepackage{xcolor}
\usepackage{graphicx}
\usepackage[]{amsfonts}
\usepackage{amsmath}
\usepackage {amssymb}
\usepackage{bm}
\usepackage{mathrsfs}
\usepackage{setspace}
\usepackage{amsmath}
 
\numberwithin{lem}{section}
\newtheorem{prop}{Proposition}
\numberwithin{prop}{section}
\newtheorem{Thm}{Theorem}[section]
\newtheorem{rem}{Remark}[section]
\numberwithin{rem}{section}

\usepackage{MnSymbol,wasysym}

\def\RR{{\mathbb R}}

\def\NN{\mathbb N}

\def\RR{{\mathbb R}}

\def\NN{\mathbb N}

\title{Convergence Analysis of PINNs for Fractional Diffusion Equations in Bounded Domains}

\author{
Elie Abdo\thanks{Department of Mathematics, American University of Beirut, Beirut, 1107 2020, Lebanon. (\texttt{ea94@aub.edu.lb}).}
\and
Lihui Chai\thanks{School of Mathematics, Sun Yat-sen University, Guangzhou, 510275, China. (\texttt{chailihui@mail.sysu.edu.cn}).}
\and
Ruimeng Hu\thanks{Department of Mathematics, Department of Statistics and Applied Probability, University of California, Santa Barbara, CA 93106-3080, USA. (\texttt{rhu@ucsb.edu}).}
\and
Xu Yang\thanks{Department of Mathematics, University of California, Santa Barbara, CA 93106-3080, USA. (\texttt{xuyang@math.ucsb.edu}).}
}
\date{}

\begin{document}

\maketitle

\begin{abstract} 
We establish the convergence of physics-informed neural networks (PINNs) for time-dependent fractional diffusion equations posed on bounded domains. The presence of fractional Laplacian operators introduces nonlocal behavior and regularity constraints, and standard neural network approximations do not naturally enforce the associated spectral boundary conditions. To address this challenge, we introduce a spectrally-defined mollification strategy that preserves the structure of the nonlocal operator while ensuring boundary compatibility. This enables the derivation of rigorous energy estimates in Sobolev spaces. Our results rely on analytical tools from PDE theory, highlighting the compatibility of PINN approximations with classical energy estimates for nonlocal equations. We prove convergence of the PINN approximation in any space-time Sobolev norm $H^k$ (with $k \in \N)$. The analysis highlights the role of mollified residuals in enabling theoretical guarantees for neural-network-based solvers of nonlocal PDEs.
\end{abstract}

\section{Introduction}

Fractional partial differential equations (PDEs) have emerged as effective models for describing anomalous transport, long range interactions, and memory effects in various physical and engineering systems. Applications include subsurface flow in geophysics~\cite{benson2000application}, edge detection in image processing~\cite{gilboa2009nonlocal}, turbulent diffusion~\cite{metzler2000random}, and option pricing in mathematical finance~\cite{cartea2007fractional}. Among these models, the fractional diffusion equation, in which the classical Laplacian is replaced by a fractional Laplace operator, captures the nonlocal nature of transport processes governed by heavy tailed jump distributions or Lévy flights. This nonlocality gives rise to qualitatively different dynamics compared to standard diffusion and introduces substantial challenges in both analysis and computation.

On bounded domains, the use of fractional Laplacians introduces both analytical and computational complexities. Chief among these are the nonlocal nature of the operator and the proper treatment of boundary conditions. Unlike their local counterparts, solutions to fractional diffusion equations are influenced by the behavior of the function over the entire domain, and enforcing Dirichlet conditions requires nonstandard formulations. Various interpretations of the fractional Laplacian exist in bounded domains, including spectral, restricted, and regional definitions, each with distinct analytical and numerical implications~\cite{caffarelli2016fractional, constantin2017some}. In this work, we focus on the spectral definition of the Dirichlet fractional Laplacian, which arises naturally from the eigenstructure of the classical Laplacian and is widely used in physical models. For recent studies on the Dirichlet fractional Laplacian and its role in nonlocal active scalar equations on bounded domains, we refer to~\cite{abdo2024dirichlet}.

Physics-informed neural networks (PINNs) have been proposed as a flexible mesh free approach for solving PDEs by combining the expressive power of neural networks with physics based loss functions~\cite{jin2023asymptotic, jin2024asymptotic, wan2025error, raissi2019physics, E2018, li2020fourier, abdo2025neural, lu2022neural, zhu2025physicssolver, chen2025structure}. While PINNs have shown promising performance in a variety of settings~\cite{deepGalerkin2018, zang2020weak, cai2021least}, rigorous theoretical guarantees remain limited, especially in nonlocal and fractional contexts. Recent works~\cite{lu2021learning, beck2020overview, hu2023higher, abdo2025neural} have analyzed PINN approximation errors for local PDEs under strong smoothness assumptions and idealized optimization settings. However, comparatively fewer results address convergence of PINNs when the underlying operator is nonlocal and defined on bounded domains with nontrivial boundary conditions. In particular, the spectral Dirichlet fractional Laplacian presents two main difficulties. First, standard network architectures do not naturally satisfy spectral boundary conditions. Second, the nonlocal operator acts globally, which complicates the use of classical energy methods. Similar issues have been observed in periodic settings, where incorporating boundary conditions into the network architecture can significantly improve both stability and accuracy~\cite{hao2024structure}.

In a recent work~\cite{abdo2024error}, a convergence theory was developed for PINNs approximating kinetic equations, specifically the Boltzmann equation near equilibrium. While that setting involved nonlocality in velocity space, the present work addresses spatial nonlocality on bounded domains, leading to fundamentally different analytical challenges. Our goal is to extend rigorous convergence analysis to fractional diffusion operators in bounded domains, where standard numerical methods often suffer from increased complexity due to nonlocal interactions and irregular boundaries.

To address these challenges, we introduce a spectrally defined mollification strategy that modifies the residual loss to ensure compatibility with the Dirichlet boundary conditions. This approach enables a rigorous convergence analysis in Sobolev spaces. In particular, we establish error bounds for the PINN approximation in any space-time Sobolev norm $H^k$ (with $k \in \N)$.  Our results contribute to the mathematical foundations of PINNs and provide theoretical justification for their application to nonlocal diffusion equations posed on bounded domains. While our work is motivated by numerical methods for nonlocal PDEs, the main techniques developed in this paper are grounded in the tools of PDE analysis and functional estimates, rather than in discretization or algorithmic schemes. As such, our contribution is primarily analytical in nature.

We begin by recalling the definition and properties of the spectral Dirichlet fractional Laplacian on bounded domains.

\subsection{Dirichlet fractional Laplacian} Let $\Omega$ be a bounded smooth domain in $\R^d$. We denote by $\Delta_D$ the Laplacian operator with homogeneous Dirichlet boundary conditions. We note that $-\Delta_D$  is defined on $\mathcal{D}(-\D_D) = H^2(\Omega) \cap H_0^1(\Omega)$, and is positive and self-adjoint in $L^2(\Omega)$. Then there exists an orthonormal basis of $L^2(\Omega)$ consisting of eigenfunctions $\left\{w_j\right\}_{j=1}^{\infty} \subset H_0^1(\Omega)$ of $-\Delta_D$ satisfying
\begin{equation}
    -\Delta_D w_j = \lambda_j w_j,
\end{equation}
where the eigenvalues $\lambda_j$ obey $0 < \lambda_1 \leq ... \leq \lambda_j \le ...\rightarrow \infty$.
For $s \in \R$, we define the fractional Laplacian operator of order $s$, denoted by $\l^s$, as
\begin{equation} \la{maindef}
    \l^s h= \sum_{j=1}^{\infty} \lambda_j^{\fr{s}{2}} (h, w_j)_{L^2} w_j,
\end{equation} with domain
\begin{equation}
    \mathcal{D}(\l^s) = \left\{h  : \|\l^{s} h\|_{L^2}^2 := \sum\limits_{j=1}^\infty \lambda_j^{s}(h, w_j)_{L^2}^2 < \infty \right\}.
\end{equation}
In particular, when $s>0$ the space $\mathcal D(\l^{-s})$ is understood as the dual space of $\mathcal D(\l^{s})$. It is evident that  $\mathcal D(\l^{s_2}) \subset \mathcal D(\l^{s_1})$ provided that $s_1  \le s_2$.
For $s \in [0,1]$, we identify the domains $\mathcal{D}(\l^s)$ with the usual Sobolev spaces as follows,
\begin{equation} \label{identification}
    \mathcal{D}(\l^s) = \begin{cases} H^{s}(\Omega), \hspace{8cm} \mathrm{if} \; s \in [0, \fr{1}{2}),
        \\ H_{00}^{\fr{1}{2}} (\Omega) = \left\{h \in H_0^\fr{1}{2} (\Omega) : h/\sqrt{d(x)} \in L^2(\Omega)\right\}, \hspace{2.5cm} \mathrm{if} \; s = \fr{1}{2},
        \\H_{0}^{s} (\Omega), \hspace{8cm} \mathrm{if} \; s \in (\fr{1}{2},1],
    \end{cases}
\end{equation} where $H_0^{s}(\Omega)$ is the Hilbert subspace of $H^s(\Omega)$ with vanishing boundary trace elements, and $d(x)$ is the distance to the boundary function.

We recall the identity
\begin{equation}
    \lambda^{\fr{s}{2}} = c_s \int_{0}^{\infty} t^{-1-\fr{s}{2}} (1-e^{-t\lambda}) dt,
\end{equation}
that holds for $s \in (0,2)$, where $c_s$ is given by
\begin{equation}
    1 = c_s \int_{0}^{\infty} t^{-1-\fr{s}{2}} (1-e^{-t}) dt.
\end{equation} Using the latter, we  obtain the integral representation
\begin{equation} \la{intrep}
    (\l^s f)(x) = c_s \int_{0}^{\infty} [f(x) - e^{t\Delta_D}f(x)]t^{-1 - \fr{s}{2}} dt,
\end{equation} for $f \in \mathcal{D}(\l^s)$ and $s \in (0,2)$. Here the heat operator $e^{t\Delta_D}$ is defined as
\begin{equation} \label{eqn:etdelta}
    (e^{t\Delta_D}f)(x) = \int_{\Omega} H_D(x, y, t) f(y) dy, 
\end{equation} with kernel $H_D(x,y, t)$ given by
\begin{equation} \label{eqn:heat-kernel}
    H_D(x,y,t) = \sum_{j=1}^{\infty} e^{-t\lambda_j} w_j(x) w_j(y).
\end{equation}

For $\epsilon \in (0,1)$, we let $J_{\epsilon}$ be the spectrally regularizing operator defined in terms of the heat semigroup $e^{t\Delta_D}$ by
\be
J_{\epsilon} \theta(x)
= \frac{-1}{\ln \epsilon} \int_{\epsilon}^{\fr{1}{\epsilon}} \frac{e^{t\Delta_D}\theta(x)}{t} dt.
\ee This family of regularizers not only smooths out an $L^2$ function but also ensures that all its higher-order Laplacians vanish on the boundary of $\Omega$. That is, given a function $\theta \in L^2(\Omega)$, we have $J_{\epsilon}\theta \in \mathcal{D}(\l^k)$ for any $k \in \mathbb{N}$ (see Proposition~\ref{thm21}).

\subsection{Residuals and errors} Let $\alpha \in [0,2]$. We are interested in approximating solutions to advection-diffusion equations
\be \la{fracheat}
\pa_t \psi + u \cdot \na \psi + \Lambda_D^{\alpha} \psi = f,
\ee with boundary conditions 
\be 
\psi|_{\pa \Omega} = 0,
\ee and initial data 
\be 
\psi(x,0) = \psi_0(x),
\ee 
by PINNs. Here $u:= u(x,t)$ is a given smooth divergence-free vector field obeying 
\be 
u \cdot n|_{\pa \Omega} = 0,
\ee where $n$ is the outward unit normal to $\pa \Omega$, and $f:=f(x,t)$ is a given smooth function such that $(-\Delta)^k f|_{\pa \Omega} = 0$ for any $k \in \N$

Neural networks do not necessarily vanish on the boundary of $\Omega$, and therefore they do not necessarily belong to $\mathcal{D}(\Lambda_D^{\alpha})$ when $\alpha \ge 1/2$. In order to overcome this challenge, we make use of the mollifiers $J_{\epsilon}$ to define the following PDE residual
\be
\mathcal{R}_i [\theta](x,t) = \pa_t J_{\epsilon} \psi_{\theta} + u \cdot \na J_{\epsilon} \psi_{\theta} + \l^{\alpha} J_{\epsilon} \psi_{\theta} - f,
\ee the initial residual 
\be 
\mathcal{R}_{t}[k;\theta] = \sum\limits_{i=1}^{k} [(\pa_t^{(i)} \psi_{\theta})  - (\pa_t^{(i)} \psi)](x,0),
\ee and the boundary residual
\be 
\mathcal{R}_{b}[\theta](x,t) =  \psi_{\theta}|_{\pa \Omega}.
\ee These residuals are well-defined because $J_{\epsilon}\psi_{\theta} \in \mathcal{D}(\l^k)$ for all $k \in \N$.  We point out that the time derivative term $(\pa_t^{(i)} \psi)(x,0)$ appearing in the definition of the initial residual depends solely on the spatial derivatives of the given initial data $\psi_0$ as $\psi$ is the exact smooth solution of the PDE \eqref{fracheat}. For regularity indices $\ell, k \in \N$, we define the total error $\mathcal{E}[\ell, k; \theta]$ by 
\be 
\mathcal{E}[\ell, k;\theta]^2 = \int_{0}^{T} \sum\limits_{i=0}^{k} \|\l^{\ell} \pa_t^{(i)}(\psi  - J_{\epsilon}\psi_{\theta})\|_{L^2}^2dt.
\ee We note that $\mathcal{E}[\ell,k;\theta]$ is well-defined because the solution $\psi$ is in $\mathcal{D}(\l^j)$ for any $j \in \N$ (see Appendix \ref{exandun}) and $J_{\epsilon} \psi_{\theta} \in \mathcal{D}(\l^j)$ for any $j \in \N$. Moreover, $\mathcal{E}[k,k;\theta]$ dominates the Sobolev $H^k([0,T] \times \Omega)$ norm of $\psi - J_{\epsilon}\psi_{\theta}$ (see Proposition \ref{dom}) and consequently, $\mathcal{E}[\ell,k;\theta]$ measures the distance between the true solution and the neural network in all space-time Sobolev spaces.

For a time regularity index $k \in \N$ and a spatial regularity index $\ell \in \N$, we define the generalization error $\mathcal{E}_G[\ell, k;\theta]$ by 
\be 
\mathcal{E}_G[\ell, k;\theta]^2
= \mathcal{E}_G^i[\ell, k;\theta]^2 + \mathcal{E}_G^t[\ell, k;\theta]^2 + \mathcal{E}_G^b[\theta]^2, 
\ee where 
\be 
\mathcal{E}_G^i[\ell, k;\theta]^2
= \begin{cases}
\int_{0}^{T} \sum\limits_{j=0}^{k} \|(-\Delta)^{\frac{\ell}{2}}\pa_t^{(j)} \mathcal{R}_i[\theta]\|_{L^2}^2 dt, \hspace{1.5cm} \mathrm{if \; \ell \; is \; even}
\\ \int_{0}^{T} \sum\limits_{j=0}^{k} \|\na(-\Delta)^{\frac{\ell-1}{2}}\pa_t^{(j)} \mathcal{R}_i[\theta]\|_{L^2}^2 dt, \hspace{1cm} \mathrm{if \; \ell \; is \; odd}
\end{cases},
\ee
\be 
\mathcal{E}_G^t[\ell,k;\theta]^2
= \begin{cases}
\|(-\Delta)^{\frac{\ell}{2}} \mathcal{R}_t[k;\theta]\|_{L^2}^2 , \hspace{1.5cm} \mathrm{if \; \ell \; is \; even}
\\ \|\na(-\Delta)^{\frac{\ell-1}{2}} \mathcal{R}_t[k;\theta]\|_{L^2}^2 , \hspace{1cm} \mathrm{if \; \ell \; is \; odd}
\end{cases},
\ee and 
\be 
\mathcal{E}_G^b[\theta]^2
= 
\int_{0}^{T} \int_{\pa \Omega }\mathcal{R}_b[\theta]^2 d\sigma(x).
\ee
We note that these errors are also well-defined because the residuals are smooth functions and the involved operators are all local. 

\subsection{Organization of the paper} The remainder of the paper is organized as follows. In Section~\ref{sec:regularizer}, we introduce the mollification strategy and establish essential analytical properties of the spectrally-defined regularizers, which play a key role in enforcing boundary compatibility for the PINN residual. Section~\ref{sec:GEE} is devoted to the generalization error analysis, where we derive upper bounds for the PINN residuals under suitable smoothness assumptions on the target solution. In Section~\ref{sec:TEE}, we combine the approximation and generalization analysis to obtain total error estimates for the PINN solution in any space-time Sobolev norm $H^k$ (with $k \in \N)$.  Section~\ref{sec:conclusion} provides concluding remarks and discusses directions for future work. Finally, the existence of unique smooth solutions to fractional reaction-diffusion equations on bounded smooth domains is established in Appendix \ref{exandun}. 

\section{Properties of the Regularizers}\label{sec:regularizer}
In this section, we investigate key properties of the family of mollification operators $J_{\epsilon}$, which play a central role in the convergence analysis of PINNs developed in later sections.

 The operator $J_{\epsilon}$ is uniformly bounded in the norm of $\mathcal{D}(\l^s)$ and smoothes out any function in $L^2$:

\begin{prop} \la{thm21}  Let $s$ be a real number and $\epsilon \in (0,1)$ be a small positive number. There exists a positive number $C$ depending only on $s$ such that
    \be \label{prop32}
    \|\l^s J_{\epsilon} \theta\|_{L^2} \le C\|\l^s\theta\|_{L^2},
    \ee
    provided that $\theta \in \mathcal{D}(\l^{s})$. Moreover,  for any real number $s\geq 0$ , it  holds that 
\begin{equation}
    \|\Lambda_D^{s} J_{\epsilon} \theta\|_{L^2} \le C \epsilon^{-\frac{s}2} \|\theta\|_{L^2},
\end{equation} for $\theta\in L^2(\Omega)$.
\end{prop}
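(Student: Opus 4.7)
The approach is to realize $J_{\epsilon}$ as a spectral Fourier multiplier in the eigenbasis $\{w_j\}$ of $-\Delta_D$ and reduce both estimates to pointwise bounds on the symbol. Since $e^{t\Delta_D}w_j = e^{-t\lambda_j}w_j$, substituting the spectral expansion of $\theta$ into the definition of $J_{\epsilon}$ and exchanging the heat-semigroup integral with the spectral sum (justified by dominated convergence once the uniform bound below is established) yields
\be
J_{\epsilon}\theta = \sum_{j=1}^{\infty} m_{\epsilon}(\lambda_j)\,(\theta,w_j)_{L^2}\,w_j,\qquad m_{\epsilon}(\lambda) := \frac{-1}{\ln \epsilon}\int_{\epsilon}^{1/\epsilon}\frac{e^{-t\lambda}}{t}\,dt.
\ee
By the spectral definition of $\l^s$ (and its extension to $s<0$ by duality), this gives
\be
\|\l^s J_{\epsilon}\theta\|_{L^2}^2 = \sum_{j=1}^{\infty}\lambda_j^{s}\,|m_{\epsilon}(\lambda_j)|^2\,|(\theta,w_j)_{L^2}|^2,
\ee
so it suffices to obtain pointwise bounds on the multiplier $m_{\epsilon}$.

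For the first inequality, I would use the trivial bound $e^{-t\lambda}\le 1$ on $[\epsilon,1/\epsilon]$, which gives $\int_{\epsilon}^{1/\epsilon}\frac{e^{-t\lambda}}{t}dt \le -2\ln \epsilon$, hence $|m_{\epsilon}(\lambda)|\le 2$ uniformly in $\lambda\ge 0$ and $\epsilon\in(0,1)$. Plugging this into the spectral identity immediately delivers $\|\l^s J_{\epsilon}\theta\|_{L^2}\le 2\|\l^s\theta\|_{L^2}$.

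For the second inequality, the key observation is that on $t\in[\epsilon,1/\epsilon]$ one has $e^{-t\lambda}\le e^{-\epsilon\lambda}$, yielding the sharper bound $|m_{\epsilon}(\lambda)|\le 2e^{-\epsilon\lambda}$. The desired symbol estimate then reduces to $\lambda^{s}e^{-2\epsilon\lambda}\le C_s\epsilon^{-s}$, which follows via the substitution $u=2\epsilon\lambda$ and the elementary maximum $\sup_{u\ge 0}u^{s}e^{-u}=(s/e)^{s}$ for $s\ge 0$. Summing over $j$ then gives $\|\l^s J_{\epsilon}\theta\|_{L^2}^2\le C_s\epsilon^{-s}\|\theta\|_{L^2}^2$, as claimed.

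The analytical content is modest; the main obstacle is really just setting up the spectral multiplier picture correctly, in particular justifying the exchange of the heat-semigroup integral with the spectral expansion and ensuring the identification $m_\epsilon(\lambda_j)$ still holds when $s<0$, where $\l^s$ is defined by duality. The crucial insight is that one must not attempt to extend the integration in $t$ outside $[\epsilon,1/\epsilon]$: the truncation is precisely what makes $m_{\epsilon}$ uniformly bounded, while the lower cutoff $t=\epsilon$ is what delivers the decisive factor $e^{-\epsilon\lambda}$ needed to tame $\lambda^{s}$ at the price of a polynomial loss $\epsilon^{-s}$.
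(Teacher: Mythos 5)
Your proof is correct. Note that the paper itself omits the argument, deferring to Lemma~1 of \cite{abdo2024regularity}, so there is no in-paper proof to compare against; but the spectral-multiplier reduction you give is the canonical one and is almost certainly what the cited lemma does. Your key pointwise bounds are sound: $|m_{\epsilon}(\lambda)|\le 2$ from $e^{-t\lambda}\le 1$ on $[\epsilon,1/\epsilon]$ delivers the uniform boundedness for all real $s$, and the sharper $|m_{\epsilon}(\lambda)|\le 2e^{-\epsilon\lambda}$ together with $\sup_{u\ge 0}u^{s}e^{-u}=(s/e)^{s}$ (for $s>0$) correctly produces the $\epsilon^{-s/2}$ smoothing rate after the substitution $u=2\epsilon\lambda$. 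The exchange of the $t$-integral with the spectral sum is unproblematic since the integrand $t\mapsto e^{t\Delta_D}\theta/t$ is a continuous $L^{2}$-valued map on the compact interval $[\epsilon,1/\epsilon]$ and Fourier coefficients commute with Bochner integration, and the identification of $J_\epsilon$ as the multiplier $m_\epsilon$ persists for $s<0$ because the heat semigroup and $\l^s$ are both defined spectrally on the dual scale.
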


The proof of Proposition~\ref{thm21} follows closely from that of Lemma 1 in~\cite{abdo2024regularity} and is therefore omitted here for brevity.

We next state a key commutation property between the fractional Laplacian $\l^s$ and the regularization operators $J_{\epsilon}$, which plays a central role in our analysis. This result ensures that the mollification procedure is compatible with the nonlocal structure of the problem. Specifically, the operators $\l^s$ and $J_{\epsilon}$ commute on $\mathcal{D}(\l^s)$:

\begin{prop}\label{prop:commute}
Let $\epsilon \in (0,1)$, $s > 0$, and $f \in \mathcal{D}(\l^s)$. Then
\[
\l^s J_{\epsilon} f(x) = J_{\epsilon} \l^s f(x),
\]
for almost every $x \in \Omega$.
\end{prop}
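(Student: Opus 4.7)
My plan is to exploit the fact that both $\Lambda_D^s$ and $J_\epsilon$ are defined through functional calculus on the eigenbasis $\{w_j\}_{j=1}^\infty$ of $-\Delta_D$, so each acts as a Fourier multiplier in the spectral variable. On a single eigenfunction $w_j$, one has $e^{t\Delta_D}w_j = e^{-t\lambda_j}w_j$, hence
\begin{equation}
J_\epsilon w_j = m_\epsilon(\lambda_j)\, w_j, \qquad m_\epsilon(\lambda) := \frac{-1}{\ln \epsilon}\int_\epsilon^{1/\epsilon} \frac{e^{-t\lambda}}{t}\,dt,
\end{equation}
and $\Lambda_D^s w_j = \lambda_j^{s/2}w_j$. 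Multipliers always commute, so the identity is immediate on each eigenfunction; the task is to extend this to arbitrary $f \in \mathcal{D}(\Lambda_D^s)$ via the spectral expansion and to justify all limit exchanges.

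First, I would write $f = \sum_{j=1}^\infty (f,w_j)_{L^2}\,w_j$ with convergence in $\mathcal{D}(\Lambda_D^s)$, and apply $J_\epsilon$ term by term. To do so rigorously, I would start from the definition $e^{t\Delta_D}f(x) = \int_\Omega H_D(x,y,t)f(y)\,dy$ with the heat kernel expansion \eqref{eqn:heat-kernel}, substitute into the defining integral of $J_\epsilon$, and interchange the $t$-integral with the $j$-sum. The interchange is justified by Fubini/Tonelli using the exponential decay $e^{-t\lambda_j}$ together with the uniform $L^2$-boundedness of the partial sums $\sum_{j=1}^N (f,w_j)_{L^2}w_j$; alternatively, one can first establish the identity $J_\epsilon w_j = m_\epsilon(\lambda_j)w_j$ directly and then use linearity and $L^2$-continuity of $J_\epsilon$ (which follows from $|m_\epsilon(\lambda)|\le 1$ for $\lambda\ge 0$) to extend from finite linear combinations to all of $L^2(\Omega)$.

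Once $J_\epsilon f = \sum_j m_\epsilon(\lambda_j)(f,w_j)_{L^2}\,w_j$ is established, I apply the spectral definition \eqref{maindef} to both sides of the claimed identity. On the one hand,
\begin{equation}
\Lambda_D^s J_\epsilon f = \sum_{j=1}^\infty \lambda_j^{s/2}\, m_\epsilon(\lambda_j)\,(f,w_j)_{L^2}\,w_j,
\end{equation}
and on the other,
\begin{equation}
J_\epsilon \Lambda_D^s f = \sum_{j=1}^\infty m_\epsilon(\lambda_j)\,\lambda_j^{s/2}\,(f,w_j)_{L^2}\,w_j,
\end{equation}
where the latter uses $(\Lambda_D^s f, w_j)_{L^2} = \lambda_j^{s/2}(f,w_j)_{L^2}$. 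The two expressions coincide termwise. The convergence of both series in $L^2(\Omega)$ is guaranteed because $f\in\mathcal{D}(\Lambda_D^s)$ ensures $\sum_j \lambda_j^s (f,w_j)_{L^2}^2 <\infty$, and $|m_\epsilon(\lambda_j)|\le 1$ keeps the coefficients bounded; in fact Proposition~\ref{thm21} already gives the uniform bound $\|\Lambda_D^s J_\epsilon f\|_{L^2}\le C\|\Lambda_D^s f\|_{L^2}$, showing both sides lie in $L^2(\Omega)$.

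The main obstacle is a pedestrian but genuine one: the rigorous justification of the termwise identification of $J_\epsilon w_j$, i.e., pushing the infinite spectral sum through the improper Bochner integral in $t$ over $[\epsilon,1/\epsilon]$. I would handle this by truncating at frequency $N$, using continuity of $e^{t\Delta_D}$ on $L^2$ and dominated convergence (with dominant $e^{-t\lambda_1}/t \cdot \|f\|_{L^2}$ on the finite interval $[\epsilon,1/\epsilon]$) to pass $N\to\infty$. The pointwise (a.e.\ in $x$) statement then follows from $L^2$-convergence combined with extraction of an a.e.\ convergent subsequence, which is standard.
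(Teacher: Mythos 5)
The paper does not supply its own argument for this proposition; it simply refers the reader to~\cite{abdo2024dirichlet}, so there is no in-text proof to compare against. Your spectral-multiplier argument is the natural one and is correct in substance: both $J_\epsilon$ and $\Lambda_D^{s}$ are diagonal on the eigenbasis $\{w_j\}$ of $-\Delta_D$, with symbols $m_\epsilon(\lambda)$ and $\lambda^{s/2}$ respectively, so they commute termwise, and the extension from finite eigenfunction sums to all of $\mathcal{D}(\Lambda_D^{s})$ follows from $L^2$-boundedness of $J_\epsilon$ and density.

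Two small corrections are worth noting, though neither affects the conclusion. First, the uniform symbol bound is $0\le m_\epsilon(\lambda)\le 2$, not $\le 1$: since $0\le e^{-t\lambda}\le 1$ and $\int_\epsilon^{1/\epsilon}\frac{dt}{t}=-2\ln\epsilon$, one gets $m_\epsilon(\lambda)\le\frac{-1}{\ln\epsilon}\,(-2\ln\epsilon)=2$. Only uniformity in $\lambda$ is used, so the argument is unaffected. Second, invoking $|m_\epsilon|\le 1$ to justify the $L^2$-continuity of $J_\epsilon$ is mildly circular, since the multiplier representation on all of $L^2$ is precisely what that continuity is being used to prove. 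The clean fix is to get boundedness directly from the definition of $J_\epsilon$ and the $L^2$-contractivity of the heat semigroup:
\begin{equation}
\|J_\epsilon\theta\|_{L^2}\le\frac{-1}{\ln\epsilon}\int_\epsilon^{1/\epsilon}\frac{\|e^{t\Delta_D}\theta\|_{L^2}}{t}\,dt\le\frac{-1}{\ln\epsilon}\int_\epsilon^{1/\epsilon}\frac{\|\theta\|_{L^2}}{t}\,dt=2\|\theta\|_{L^2},
\end{equation}
which supplies the continuity needed for the density step. Finally, the closing appeal to subsequence extraction is unnecessary: once $\Lambda_D^{s}J_\epsilon f$ and $J_\epsilon\Lambda_D^{s}f$ are shown to have identical spectral coefficients, they coincide as elements of $L^2(\Omega)$, and a.e.\ equality is exactly what that means.
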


We refer the reader to~\cite{abdo2024dirichlet} for a detailed proof of Proposition~\ref{prop:commute}.

We also observe that the integer powers of the Laplacian commute with the operators $J_{\epsilon}$ without any boundary assumptions. In particular, this commutation property holds for $H^{2k}$ functions, with $k 
\in \N$, which do not necessarily satisfy homogeneous boundary conditions. We state and prove this result below.

\beg{prop} \label{lapcom} Let $\epsilon \in (0,1)$. Let $f \in H^2(\Omega)$. Then 
\be 
\Delta J_{\epsilon} f(x) = J_{\epsilon} \Delta f(x),
\ee for a.e. $x \in \Omega$. Consequently, if $k \in \NN$ and $f \in H^{2k}(\Omega)$, then 
\be 
(-\Delta)^k J_{\epsilon}f (x) = J_{\epsilon}(-\Delta)^k f(x),
\ee for a.e. $x \in \Omega$.
\end{prop}

\begin{proof} The commutator $T_{\epsilon} = J_{\epsilon} \Delta - \Delta J_{\epsilon}$ vanishes on $C_{0}^{\infty}(\Omega)$ because $J_{\epsilon}$ and $-\Delta = \l^2$ commutes on $\mathcal{D}(\l^2)$ (see Proposition \ref{prop:commute}). In view of the density of $C_{0}^{\infty}(\Omega)$ in $L^2(\Omega)$, $T_{\epsilon}$ extends uniquely to a bounded linear operator $\tilde{T}_{\epsilon}$ on $L^2(\Omega)$ such that $\tilde{T}_{\epsilon}h =0$ for any $h \in L^2(\Omega)$. However, the operator $T_{\epsilon}$ is well-defined on $H^2(\Omega)$. Moreover, it holds that 
\be \la{14}
\|T_{\epsilon}f\|_{L^2} \le C_{\epsilon}\|f\|_{L^2}.
\ee In order to prove the latter, we have, on the one hand, that 
\be \label{12}
\|\Delta J_{\epsilon} f\|_{L^2} 
\le C_{\epsilon}\|f\|_{L^2},
\ee due to \eqref{prop32}. On the other hand, $J_{\epsilon} \Delta f$ has the following eigenfunction expansion,
\be 
J_{\epsilon} \Delta f = \sum\limits_{j=1}^{\infty} \left(-\frac{1}{\ln \epsilon} \int_{\epsilon}^{1/\epsilon} \frac{e^{-t\lambda_j}}{t} dt \right) (\Delta f, \omega_j)_{L^2} \omega_j,
\ee and thus, its $L^2$ norm obeys 
\be 
\|J_{\epsilon}\Delta f\|_{L^2}^2
= \sum\limits_{j=1}^{\infty} \left(-\frac{1}{\ln \epsilon}\int_{\epsilon}^{1/\epsilon} \frac{e^{-t\lambda_j}}{t} dt \right)^2 (\Delta f, \omega_j)_{L^2}^2.
\ee As $t \ge \epsilon$, it holds that $e^{-t\lambda_j} \le e^{-\epsilon \lambda_j}$, and consequently,
\be 
\|J_{\epsilon}\Delta f\|_{L^2}^2 
\le \sum\limits_{j=1}^{\infty} \left(-\frac{1}{\ln \epsilon} \int_{\epsilon}^{1/\epsilon} \frac{1}{t}dt\right)^2 e^{-\epsilon \lambda_j} (\Delta f, \omega_j)_{L^2}^2
= \sum\limits_{j=1}^{\infty} 4e^{-\epsilon \lambda_j} (\Delta f, \omega_j)_{L^2}^2.
\ee In view of the boundedness of the operator $(-\Delta_D)^{-1} \Delta$ from $L^2(\Omega)$ to $L^2(\Omega)$, we have 
\be 
\beg{aligned}
|(\Delta f, \omega_j)_{L^2}|
&= |(-\Delta_D)(-\Delta_D)^{-1} \Delta f, \omega_j)_{L^2}
= |(-\Delta_D)^{-1}\Delta f, -\Delta_D \omega_j)_{L^2}|
\\&\le \|(-\Delta_D)^{-1}\Delta f\|_{L^2} \|\Delta \omega_j\|_{L^2}
\le C\lambda_j \|f\|_{L^2}, 
\end{aligned}
\ee 
after integrating by parts and using the fact that $\omega_j$ and $(-\Delta_D)^{-1} \Delta f$ vanish on the boundary of $\Omega$.  Consequently,
\be \label{13}
\|J_{\epsilon}\Delta f\|_{L^2}^2
\le C\|f\|_{L^2}^2 \sum\limits_{j=1}^{\infty} \lambda_j^2 e^{-\epsilon \lambda_j}
\le C_{\epsilon}\|f\|_{L^2}^2.
\ee  Putting \eqref{12} and \eqref{13} together gives \eqref{14}. This shows that $T_{\epsilon}$ is a bounded linear operator from $(H^2(\Omega), \|\cdot\|_{L^2})$ to $(L^2(\Omega), \|\cdot\|_{L^2})$. By the uniqueness of the extension, we infer that 
\be 
\tilde{T}_{\epsilon} = J_{\epsilon}\Delta - \Delta J_{\epsilon}, 
\ee on $(H^2(\Omega), \|\cdot\|_{L^2})$. But $\tilde{T}_{\epsilon}$ vanishes on $L^2(\Omega)$,  thus 
\be 
J_{\epsilon}\Delta - \Delta J_{\epsilon} = 0, 
\ee on $H^2(\Omega)$. 
\end{proof} 

We derive new quantitative bounds for the convergence of the regularizers $J_{\epsilon}$ in the norms of $\mathcal{D}(\l^s)$.

\beg{prop} \label{quantitativeconv}
Let $\epsilon \in (0,1)$ and $s \ge 0$ be real numbers. Let $\psi \in \mathcal{D}(\l^{2s+d})$. There exists a real-valued positive function $\kappa(\epsilon)$ such that $\kappa(\epsilon) \rightarrow 0$  as $\epsilon \rightarrow 0$ and 
\be 
\|\l^s(J_{\epsilon}f - f)\|_{L^2} 
\le \kappa(\epsilon) \|f\|_{L^2}^{\frac{1}{2}}\|\l^{2s+d}f\|_{L^2}^{\frac{1}{2}}.
\ee 
\end{prop}

\begin{proof}
The eigenfunction expansion of $J_{\epsilon}f - f$ is given by 
\be 
J_{\epsilon}f - f = \sum\limits_{j=1}^{\infty} (J_{\epsilon}f - f, \omega_j)_{L^2} \omega_j,
\ee and consequently, 
\be 
\|\l^s (J_{\epsilon}f - f)\|_{L^2}^2 = \sum\limits_{j=1}^{\infty} \lambda_j^s (J_{\epsilon} f - f, \omega_j)_{L^2}^2.
\ee Using the integral representation formula of $J_{\epsilon}$ and the identity 
\be 
\frac{1}{\ln \epsilon}\int_{\epsilon}^{1/\epsilon} \frac{1}{2t} dt = -1,
\ee we have  
\be 
\begin{aligned}
(J_{\epsilon}f - f, \omega_j)_{L^2} 
&= \left[- \frac{1}{\ln \epsilon} \int_{\epsilon}^{1/\epsilon} \frac{e^{-t\lambda_j}}{t} dt -1 \right](f, \omega_j)_{L^2} 
\\&= \left[\frac{1}{\ln \epsilon}\int_{\epsilon}^{1/\epsilon} \frac{1-2e^{-t\lambda_j}}{2t} dt\right] (f, \omega_j)_{L^2},
\end{aligned}
\ee from which we obtain 
\be 
\|\l^s (J_{\epsilon}f - f)\|_{L^2}^2
= \sum\limits_{j=1}^{\infty} \left[\frac{1}{\ln \epsilon}\int_{\epsilon}^{1/\epsilon} \frac{1-2e^{-t\lambda_j}}{2t} dt \right]^2 \lambda_j^s (f, \omega_j)_{L^2}^2.
\ee An application of the Cauchy-Schwarz inequality yields
\be 
\|\l^s (J_{\epsilon}f - f)\|_{L^2}^2 
\le \left(\sum\limits_{j=1}^{\infty} \frac{1}{\lambda_j^{d}} \left[\frac{1}{\ln \epsilon} \int_{\epsilon}^{1/\epsilon} \frac{1-2e^{-t\lambda_j}}{2t} dt \right]^4 \right)^{\frac{1}{2}} \left(\sum\limits_{j=1}^{\infty} \lambda_j^{2s+d} (f, \omega_j)_{L^2}^4 \right)^{\frac{1}{2}}.
\ee We let 
\be 
\kappa(\epsilon) := \left(\sum\limits_{j=1}^{\infty} \frac{1}{\lambda_j^d} \left[\frac{1}{\ln \epsilon} \int_{\epsilon}^{1/\epsilon} \frac{1-2e^{-t\lambda_j}}{2t} dt \right]^4 \right)^{\frac{1}{4}}. 
\ee Since  
\be 
\left|\frac{1}{\ln \epsilon} \int_{\epsilon}^{1/\epsilon} \frac{1-2e^{-t\lambda_j}}{2t} dt\right|
\le \frac{1}{|\ln \epsilon|} \int_{\epsilon}^{1/\epsilon}\frac{1}{2t} dt \le 1,
\ee and $\lambda_j \ge cj^{\frac{2}{d}}$ for any $j \in \NN$, it follows that 
\be 
\kappa(\epsilon)^4 \le \sum\limits_{j=1}^{\infty} \frac{1}{\lambda_j^d} \le C\sum\limits_{j=1}^{\infty} \frac{1}{j^2} <  \infty,
\ee and so $\kappa(\epsilon)$ is well-defined. Moreover, since 
\be 
\lim\limits_{\epsilon \to 0} \frac{1}{\ln \epsilon} \int_{\epsilon}^{1/\epsilon} \frac{1-2e^{-t\lambda_j}}{2t} dt = 0, 
\ee it follows that $\kappa(\epsilon) \rightarrow 0$ as $\epsilon \rightarrow 0$ by the Lebesgue Dominated Convergence Theorem. Finally, since $\|\omega_j\|_{L^2} = 1$ for any $j \in \N$, we have 
\be 
\beg{aligned}
\sum\limits_{j=1}^{\infty} \lambda_j^{2s+d} (f, \omega_j)_{L^2}^4
&\le \sum\limits_{j=1}^{\infty} \lambda_j^{2s+d} (f, \omega_j)_{L^2}^2 \|f\|_{L^2}^2 \|\omega_j\|_{L^2}^2
\\&\le \|f\|_{L^2}^2 \sum\limits_{j=1}^{\infty} \lambda_j^{2s+d} (f, \omega_j)_{L^2}^2  \le \|f\|_{L^2}^2 \|\l^{2s+d} f\|_{L^2}^2.
\end{aligned}
\ee Therefore, we obtain the desired quantitative bound
\be 
\|\l^s(J_{\epsilon}f -f)\|_{L^2}^2 \le \kappa(\epsilon)^2 \|f\|_{L^2}\|\l^{2s+d}f\|_{L^2}.
\ee 
\end{proof}

\section{Generalization Error Estimates}\label{sec:GEE}
In this section, we establish bounds on the generalization error of physics-informed neural networks for approximating solutions to fractional diffusion equations on bounded domains. Our main result shows that, under suitable regularity assumptions, one can construct a neural network whose generalization error is arbitrarily small. A key ingredient in the analysis is the approximation capability of deep neural networks for smooth functions in Sobolev norms. The following proposition, adapted from classical universal approximation theorems, provides the quantitative rate at which neural networks can approximate smooth target functions in $W^{k,\infty}$ norms.

\begin{prop} \label{existenceneural}
Let $\Omega \subset \RR^d$ be a smooth bounded domain. Let $T > 0$ be arbitrary and $k \in \NN$. Let $f \in C^{k+1} ([0,T] \times \bar{\Omega})$. Then there exists a function $\phi_N$ represented by a deep neural network with complexity $N$ such that 
\be \label{universal}
\|f - \phi_N\|_{W^{k,\infty}([0,T] \times \bar{\Omega})} \le \frac{1}{N^{\frac{1}{d+1}}} \|f\|_{W^{k+1, \infty}([0,T] \times \bar{\Omega})}.
\ee
\end{prop}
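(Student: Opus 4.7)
The proposition is essentially a quantitative universal approximation theorem in the $W^{k,\infty}$ norm on the $(d{+}1)$-dimensional set $[0,T]\times\bar{\Omega}$. My plan is to reduce the statement to a standard approximation result on a cube in $\RR^{d+1}$ via an extension argument, and then invoke a deep-network approximation bound that delivers the desired rate. No genuinely new analysis is needed: the statement is a packaged consequence of two well-established ingredients.

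First, I would extend $f$ from $[0,T]\times\bar{\Omega}$ to a function $\tilde f \in C^{k+1}(\RR^{d+1})$ with compact support inside some cube $Q \supset [0,T]\times\bar{\Omega}$, using Stein's (or Whitney's) extension theorem, which applies because $\Omega$ has smooth boundary and $[0,T]$ is a smooth interval. The extension can be chosen to satisfy $\|\tilde f\|_{W^{k+1,\infty}(Q)} \le C(\Omega,T,k)\,\|f\|_{W^{k+1,\infty}([0,T]\times\bar{\Omega})}$. Any such multiplicative constant can be absorbed into the network complexity by replacing $N$ with a constant multiple of itself, so that the clean rate $N^{-1/(d+1)}$ in \eqref{universal} is preserved.

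Second, I would invoke a deep neural network approximation result on the cube $Q \subset \RR^{d+1}$ stating that, for any $g \in W^{k+1,\infty}(Q)$, there exists a network $\phi_N$ of complexity $N$ with $\|g - \phi_N\|_{W^{k,\infty}(Q)} \le C\,N^{-1/(d+1)}\,\|g\|_{W^{k+1,\infty}(Q)}$. Such bounds are available in the literature for smooth activations such as $\tanh$ or the logistic sigmoid, in which case derivative approximation comes essentially for free from differentiation under the network construction, and also for ReLU networks via Yarotsky-type piecewise-polynomial reconstructions combined with deep compositions. Applying this bound to $g = \tilde f$ and restricting $\phi_N$ back to $[0,T]\times\bar{\Omega}$ yields \eqref{universal}.

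The main obstacle is the need to control all derivatives up to order $k$ rather than only the sup norm. This rules out many of the elementary universal approximation theorems (which give $L^\infty$ or $L^2$ control only) and dictates which reference theorem to cite; the derivation of the sharp exponent $1/(d+1)$, reflecting the curse of dimensionality for smooth function approximation in $\RR^{d+1}$, is what one actually imports from that reference. A secondary care-point is bookkeeping the dimensional constants from the extension theorem into the network-complexity definition so that \eqref{universal} is stated without a prefactor.
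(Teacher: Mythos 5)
Your proposal reaches the same conclusion, but by a genuinely different route from the paper. The paper applies Theorem~2.1 of Biswas et al.\ (the reference \texttt{biswas2022error}) directly to the compact set $[0,T]\times\bar\Omega \subset \RR^{d+1}$: that theorem gives, for any $C^k$ function on a compact set, a network of complexity $N$ whose derivatives up to order $k$ are controlled by the \emph{modulus of continuity} of the $k$-th order derivatives at the scale $N^{-1/(d+1)}$. The paper invokes the assumed $C^{k+1}$ regularity only at the very last step, bounding that modulus of continuity by the Lipschitz constant of $D^\beta f$, namely $\|f\|_{W^{k+1,\infty}}$, and thereby extracting the factor $N^{-1/(d+1)}$. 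By contrast, you first extend $f$ to a cube via Stein/Whitney and then invoke a cube-based $W^{k,\infty}$ approximation theorem (Yarotsky-type for ReLU, or tanh/sigmoid constructions with derivative control) that yields the Sobolev rate in one shot. Both routes are valid. The paper's avoids the extension step and the attendant domain/constant bookkeeping, at the cost of depending on the specific modulus-of-continuity formulation in Biswas et al.; yours is more modular and appeals to a wider family of references, but it must carefully match the notion of ``complexity $N$'' in whichever cube theorem you cite to the one used in the paper's Proposition. One shared loose point: the claim that multiplicative constants can be fully absorbed so that \eqref{universal} holds with no prefactor is not automatic (increasing $N$ to cancel a constant $C$ changes the complexity label); the paper's own proof quietly drops a constant $C$ in its final display, so this is a pre-existing imprecision in the statement rather than a defect peculiar to your argument.
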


\begin{proof}
Since $[0,T] \times \bar{\Omega}$ is a compact subset of $\R^{d+1}$ and $f \in C^{k}([0,T] \times \bar{\Omega})$, we can apply Theorem 2.1 in \cite{biswas2022error} and deduce the existence of a deep neural network $\phi_N$ with complexity $N$ such that 
\be 
\|D^{\alpha}f - D^{\alpha} \phi_N\|_{L^{\infty}([0,T] \times \bar{\Omega})} 
\le \frac{C}{N^{\frac{k-|\alpha|}{d+1}}} \sup\limits_{|(x,t) - (y,t)| \le N^{\frac{-1}{d+1}}} |D^{\beta} f(x,t) - D^{\beta} f(y,t)|,
\ee  for any $|\alpha| \le k$ and $|\beta| = k$. Since $N \ge 1$ and $k \ge |\alpha|$, it follows that 
\be 
\|D^{\alpha}f - D^{\alpha} \phi_N\|_{L^{\infty}([0,T] \times \bar{\Omega})} 
\le C\sup\limits_{|(x,t) - (y,t)| \le N^{\frac{-1}{d+1}}} |D^{\beta} f(x,t) - D^{\beta} f(y,t)|,
\ee for any $|\alpha| \le k$ and $|\beta| =k$. Thus, 
\be 
\beg{aligned}
\|D^{\alpha}f - D^{\alpha} \phi_N\|_{L^{\infty}([0,T] \times \bar{\Omega})} 
&\le C\sup\limits_{|(x,t) - (y,t)| \le N^{\frac{-1}{d+1}}} \left[\frac{|D^{\beta} f(x,t) - D^{\beta} f(y,t)|}{|(x,t) - (y,t)|} \cdot |(x,t) - (y,t)| \right] 
\\&\le \frac{1}{N^{\frac{1}{d+1}}} \|f\|_{W^{k+1, \infty}([0,T] \times \bar{\Omega})} ,
\end{aligned}
\ee for any $|\alpha| \le k$, which yields the desired estimate \eqref{universal}.
\end{proof}

\begin{Thm} Let $T>0$ be an arbitrary positive time. Let $\psi_0 \in \bigcap\limits_{m \ge 0} \mathcal{D}(\l^m)$ and $\psi$ be the corresponding smooth solution to \eqref{fracheat} with homogeneous Dirichlet boundary conditions. Let $\delta > 0$.  Then there exists a function $\widehat{\psi}$ represented by a deep neural network with complexity $N$ such that $\mathcal{E}_G \le \delta$.
\end{Thm}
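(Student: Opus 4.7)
The plan is to split each residual into a mollification error (depending only on $\epsilon$ and the smooth exact solution $\psi$) and a neural-network approximation error (depending on $F := \psi_\theta - \psi$), drive the former to zero by taking $\epsilon$ small via Proposition~\ref{quantitativeconv}, and then, with $\epsilon$ fixed, drive the latter to zero by taking the network complexity $N$ large via Proposition~\ref{existenceneural}. Subtracting the PDE $\pa_t\psi + u\cdot\na\psi + \l^\alpha\psi = f$ from the definition of $\mathcal{R}_i[\theta]$ produces the master identity
\[
\mathcal{R}_i[\theta] = (\pa_t + u\cdot\na + \l^\alpha)J_\epsilon F \;+\; (\pa_t + u\cdot\na + \l^\alpha)(J_\epsilon\psi - \psi),
\]
while $\mathcal{R}_b[\theta] = F|_{\pa\Omega}$ (using $\psi|_{\pa\Omega}=0$) and $\mathcal{R}_t[k;\theta]$ is a finite sum of pointwise time derivatives of $F$ evaluated at $t=0$.

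Fix indices $\ell, k \in \N$ and first estimate the mollification piece. Using Proposition~\ref{prop:commute} to commute $J_\epsilon$ with $\l^\alpha$ and with $\pa_t$, the three summands of $(\pa_t + u\cdot\na + \l^\alpha)(J_\epsilon\psi - \psi)$ take the form $J_\epsilon g - g$ for $g\in\{\pa_t\psi, \l^\alpha\psi\}$, plus the single term $u\cdot\na(J_\epsilon\psi - \psi)$. Applying $(-\Delta)^{\ell/2}\pa_t^{(j)}$ (or its odd-$\ell$ gradient analogue) and expanding the $u\cdot\na$ term via Leibniz (all derivatives of $u$ are bounded), each resulting contribution reduces to a finite combination of quantities of the form $\|\l^s(J_\epsilon g - g)\|_{L^2}$ where $g$ is a fixed space-time derivative of $\psi$ or $f$. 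By Appendix~\ref{exandun} every such $g$ lies in $\bigcap_m \mathcal{D}(\l^m)$, so Proposition~\ref{quantitativeconv} yields the bound $\kappa(\epsilon)\|g\|_{L^2}^{1/2}\|\l^{2s+d}g\|_{L^2}^{1/2}$. Since only finitely many such terms appear and $\kappa(\epsilon)\to 0$, I choose $\epsilon = \epsilon(\delta, \ell, k)$ small enough to force this half of $\mathcal{E}_G^i$ below $\delta/2$.

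With $\epsilon$ now fixed, I would invoke Proposition~\ref{existenceneural} at a sufficiently high regularity level $m = m(\ell,k,\alpha)$ to produce $\psi_\theta = \phi_N$ with $\|F\|_{W^{m,\infty}([0,T]\times\bar\Omega)} \le N^{-1/(d+1)} \|\psi\|_{W^{m+1,\infty}}$. Integer powers of $-\Delta$ commute with $J_\epsilon$ by Proposition~\ref{lapcom}, and the single fractional term is handled by the $\epsilon$-dependent estimate $\|\l^\alpha J_\epsilon F\|_{L^2} \le C\epsilon^{-\alpha/2}\|F\|_{L^2}$ from Proposition~\ref{thm21}. Together these reduce $(-\Delta)^{\ell/2}\pa_t^{(j)}(\pa_t + u\cdot\na + \l^\alpha)J_\epsilon F$ to a finite number of terms, each controlled by $C_\epsilon \|F\|_{W^{m,\infty}}$. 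Since $\epsilon$ was fixed first, we can take $N$ large enough to absorb $C_\epsilon$ and push this part of $\mathcal{E}_G^i$ below $\delta/2$. The same $W^{m,\infty}$ bound dominates $\|F|_{\pa\Omega}\|_{L^2(\pa\Omega)} \le |\pa\Omega|^{1/2}\|F\|_{L^\infty}$ and $\|(-\Delta)^{\ell/2}\pa_t^{(i)}F(\cdot,0)\|_{L^2}$, so the same choice of $N$ simultaneously handles $\mathcal{E}_G^b$ and $\mathcal{E}_G^t$.

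The main obstacle is the two-stage interplay between $\epsilon$ and $N$: the factor $\epsilon^{-\alpha/2}$ in the $\l^\alpha J_\epsilon F$ estimate forces the order of the two choices ($\epsilon$ first, $N$ second) and means the effective rate in $N$ is not uniform in $\epsilon$. A secondary technical point is the precise bookkeeping of $m(\ell,k,\alpha)$, since $(-\Delta)^{\ell/2}\pa_t^{(j)}(u\cdot\na J_\epsilon F)$ demands control of $F$ up to $\ell+1$ spatial derivatives and $j+1$ time derivatives; because $u$, $\psi$, $f$ are all smooth on $[0,T]\times\bar\Omega$, this remains an explicit finite count and both the mollification bound from Proposition~\ref{quantitativeconv} and the approximation bound from Proposition~\ref{existenceneural} apply at the required orders, completing the argument.
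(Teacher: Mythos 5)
Your decomposition of the residuals into a mollification error (controlled by Proposition~\ref{quantitativeconv}) and a neural-network approximation error (controlled by Proposition~\ref{existenceneural}) is the same as the paper's, and the treatment of the time-derivative, advection, initial, and boundary terms matches. The one genuine difference is your handling of the fractional diffusion term $\l^{\alpha}J_{\epsilon}F$: you invoke the $\epsilon$-dependent smoothing estimate $\|\l^{s}J_{\epsilon}\theta\|_{L^2}\le C\epsilon^{-s/2}\|\theta\|_{L^2}$ from Proposition~\ref{thm21}, which introduces a constant $C_\epsilon$ that blows up as $\epsilon\to 0$ and therefore forces you to fix $\epsilon$ first and then take $N$ large. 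The paper avoids this entirely: it lets $\gamma$ be the smallest integer $\ge (\ell+\alpha)/2$, uses the continuous embedding $\mathcal{D}(\l^{2\gamma})\subset\mathcal{D}(\l^{\ell+\alpha})$ to dominate $\|\l^{\ell+\alpha}J_\epsilon F\|_{L^2}$ by $\|(-\Delta)^{\gamma}J_\epsilon F\|_{L^2}$, commutes $J_\epsilon$ past the integer power $(-\Delta)^{\gamma}$ via Proposition~\ref{lapcom}, and then applies the \emph{uniform-in-$\epsilon$} $L^2$-boundedness of $J_\epsilon$. This yields the cleaner decoupled bound $\mathcal{E}_G\le C(N^{-2/(d+1)}+\kappa(\epsilon)^2)$, where the two parameters can be chosen independently. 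Your sequential argument is still valid and reaches the same conclusion, since for the qualitative statement $\mathcal{E}_G\le\delta$ it suffices that $C_\epsilon$ be finite for each fixed $\epsilon$; you just lose the explicit product-form rate and the independence of the two choices that the paper's integer-Laplacian trick buys. It is worth knowing that trick, as it is reused in the odd-$\ell$ case and in Step~1 throughout, and it is what makes the quantitative final estimate in the paper's Step~4 clean.
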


\begin{proof}
Let $\ell$ and $k$ be nonnegative integers. In view  of Proposition \ref{existenceneural}, there exists a neural network $\widehat{\psi}$ such that 
\be \label{hjhj}
\|\psi - \widehat{\psi}\|_{W^{\max\left\{2+\ell, k+1 \right\}, \infty}{([0,T]\times \overline{\Omega})}} \le CN^{-\frac{1}{d+1}},
\ee where $C$ is a constant depending on the initial data. The choice of the Sobolev space $W^{\max\left\{2+\ell, k+1 \right\}, \infty}{([0,T]\times \overline{\Omega})}$ is motivated by the estimates below. 

  {\bf{Step 1. Estimates for $\mathcal{E}_G^i.$}} We start by rewriting the PDE residual as follows, 
  \be 
\begin{aligned}
\mathcal{R}_i 
&= \pa_t (J_{\epsilon}\widehat{\psi} - \psi) 
+ u \cdot \na (J_{\epsilon}\widehat{\psi} - \psi) 
+ \l^{\alpha} (J_{\epsilon}\widehat{\psi} - \psi) 
\\&= \pa_t J_{\epsilon} (\widehat{\psi}-\psi) 
+ \pa_t (J_{\epsilon}\psi - \psi)
+ u \cdot \na J_{\epsilon}  (\widehat{\psi}-\psi)
+ u \cdot \na (J_{\epsilon}\psi - \psi)
\\&\quad\quad\quad\quad+ \l^{\alpha} J_{\epsilon}  (\widehat{\psi}-\psi)
+ \l^{\alpha} (J_{\epsilon}\psi - \psi). 
\end{aligned}
  \ee Fix an integer $j$ representing the order of the time derivative, such that $0 \le j \le k$. If $\ell$ is even, we apply $(-\Delta)^{\frac{\ell}{2}} \pa_t^{(j)}$ to $\mathcal{R}_i$, and we estimate each term separately.  If $\ell$ is odd, we apply $\na (-\Delta)^{\frac{\ell-1}{2}}\pa_t^{(j)}$ to $\mathcal{R}_i$ instead.

Suppose that $\ell$ is even. We first estimate the time derivative terms. Indeed, in view of Proposition \ref{lapcom}, $J_{\epsilon}$ and $(-\Delta)^{\frac{\ell}{2}}$ commute, and thus we have 
\be 
\beg{aligned}
\|(-\Delta)^{\frac{\ell}{2}} \pa_t^{(j)} \pa_t J_{\epsilon} (\widehat{\psi}-\psi)\|_{L^2}
= \|J_{\epsilon}(-\Delta)^{\frac{\ell}{2}} \pa_t^{(j+1)} (\widehat{\psi} -\psi)\|_{L^2},
\end{aligned}
\ee which can be bounded by 
\be 
\begin{aligned}
\|(-\Delta)^{\frac{\ell}{2}} \pa_t^{(j)} \pa_t J_{\epsilon} (\widehat{\psi}-\psi)\|_{L^2}
&\le C\|(-\Delta)^{\frac{\ell}{2}} \pa_t^{(j+1)} (\widehat{\psi} -\psi)\|_{L^2}
\le C\|\widehat{\psi}-\psi\|_{H^{\beta}([0,T]\times \overline{\Omega})}
\\&\le C\|\widehat{\psi}-\psi\|_{W^{\beta,\infty}([0,T]\times \overline{\Omega})}
\le CN^{-\frac{1}{d+1}},
\end{aligned}
\ee due to the uniform-in-$\epsilon$ boundedness of $J_{\epsilon}$ in $L^2$ and the estimate \eqref{hjhj}. Here $\beta = \max \left\{\ell, k+1 \right\}$. In view of the quantitative convergence bound obtained in Proposition \ref{quantitativeconv}, it follows that 
\be 
\begin{aligned}
&\|(-\Delta)^{\frac{\ell}{2}} \pa_t^{(j)} \pa_t (J_{\epsilon}{\psi} - \psi)\|_{L^2}^2
= \|(-\Delta)^{\frac{\ell}{2}} (J_{\epsilon}\pa_t^{(j+1)}\psi - \pa_t^{(j+1)}\psi)\|_{L^2}^2
\\&\le \kappa(\epsilon)^2 \|\pa_t^{(j+1)}\psi\|_{L^2} \|\l^{2\ell +d} \pa_t^{(j+1)} \psi\|_{L^2}
\le C\kappa(\epsilon)^2.
\end{aligned}
\ee Now we estimate the diffusion terms involving the fractional powers of the Dirichlet Laplacian. Since $\l^{\alpha}$ and $\pa_t^{(j)}$ commutes and $J_{\epsilon}$ and $\pa_t^{(j)}$ commutes, it holds that 
\be \la{iiii}
\|(-\Delta)^{\frac{\ell}{2}} \pa_t^{(j)} \l^{\alpha} J_{\epsilon}(\widehat{\psi}-\psi)\|_{L^2}
= \|\l^{\ell + \alpha} J_{\epsilon} (\pa_t^{(j)} \widehat{\psi} - \pa_t^{(j)} \psi)\|_{L^2}.
\ee Let $\gamma$ be the smallest integer greater than or equal to $\frac{\ell + \alpha}{2}$. Due to the continuous embedding of $\mathcal{D}(\l^{2\gamma})$ in $\mathcal{D}(\l^{\ell + \alpha})$, the fact that $J_{\epsilon}$ commutes with integer powers of the Laplacian, and the uniform-in-$\epsilon$ boundedness of $J_{\epsilon}$ in $L^2$, the identity \eqref{iiii} yields
\be
\begin{aligned}
&\|(-\Delta)^{\frac{\ell}{2}} \pa_t^{(j)} \l^{\alpha} J_{\epsilon}(\widehat{\psi}-\psi)\|_{L^2}
\le C\|(-\Delta)^{\gamma} J_{\epsilon} (\pa_t^{(j)}\widehat{\psi} - \pa_t^{(j)}\psi)\|_{L^2}
\\&= \|J_{\epsilon} (-\Delta)^{\gamma} (\pa_t^{(j)}\widehat{\psi} - \pa_t^{(j)}\psi)\|_{L^2}
\le C\|(-\Delta)^{\gamma} \pa_t^{(j)} (\widehat{\psi}- \psi)\|_{L^2}
\\&\le C\|\widehat{\psi}-\psi\|_{H^{\max\left\{\ell+2; k\right\}}([0,T]\times \overline{\Omega})}
\le \|\widehat{\psi}-\psi\|_{W^{\max\left\{\ell+2; k\right\},\infty}([0,T]\times \overline{\Omega})}
\le CN^{-\frac{1}{d+1}}.
\end{aligned}
\ee  We point out that $J_{\epsilon}$ and $\l^{\ell + \alpha}$ do not  commute when $\ell+ \alpha$ is not an integer because $\pa_t^{(i)}\widehat{\psi}$ and its higher-order Laplacians do not necesarily vanish on the boundary of $\Omega$, but $J_{\epsilon}$ and $(-\Delta)^{\gamma}$ commute, as shown in Proposition \ref{lapcom}, because $\gamma$ is an integer. As for the diffusion term that does not depend on the neural network, we have
\be 
\begin{aligned}
&\|(-\Delta)^{\frac{\ell}{2}} \pa_t^{(j)} \l^{\alpha}(J_{\epsilon}\psi -\psi)\|_{L^2}^2
= \|\l^{\ell + \alpha} (J_{\epsilon}\pa_t^{(j)} \psi - \pa_t^{(j)} \psi)\|_{L^2}^2
\\&\le \kappa(\epsilon)^2 \|\pa_t^{(j)}\psi\|_{L^2} \|\l^{2\ell + 2\alpha+d}\pa_t^{(j)}\psi \|_{L^2}
\le C\kappa(\epsilon)^2,
\end{aligned}
\ee due to the convergence property studied in Proposition \ref{quantitativeconv}. Finally, we address the advection terms involving $u$. By the Leibniz Theorem, we have 
\be \la{iuuu}
\begin{aligned}
&\|(-\Delta)^{\frac{\ell}{2}}\pa_t^{(j)}(u \cdot \na J_{\epsilon} (\widehat{\psi}-\psi))\|_{L^2}
\\&\quad\quad= \sum\limits_{v=0}^{j} \binom{j}{v}\|(-\Delta)^{\frac{\ell}{2}}\left(\pa_t^{(v)}u \cdot \na \pa_t^{(j-v)}J_{\epsilon}(\widehat{\psi}-\psi) \right)\|_{L^2}
\\&\quad\quad\le C\sum\limits_{v=0}^{j} \|\pa_t^{(v)}u\|_{W^{\ell,\infty}(\Omega)} \|\pa_t^{(j-v)}J_{\epsilon}(\widehat{\psi}-\psi) \|_{H^{1+\ell}(\Omega)}.
\end{aligned}
\ee Since $\mathcal{D}(\l^{2+\ell})$ is continuously embedded in $H^{1+\ell}(\Omega)$,  $J_{\epsilon}$ and $(-\Delta)^{1+\frac{\ell}{2}}$ commute (because $1+\frac{\ell}{2}$ is an integer), and $J_{\epsilon}$ is bounded in $L^2$, we estimate
\be
\begin{aligned}
&\|\pa_t^{(j-v)} J_{\epsilon}(\widehat{\psi}-\psi)\|_{H^{1+\ell}(\Omega)}
\le C\|(-\Delta)^{1+\frac{\ell}{2}} J_{\epsilon}\pa_t^{(j-v)} (\widehat{\psi}-\psi))\|_{L^2(\Omega)}
\\&\le C\|(-\Delta)^{1+\frac{\ell}{2}}\pa_t^{(j-v)} (\widehat{\psi}-\psi)\|_{L^2}^2
\le C\|\widehat{\psi}-\psi\|_{W^{\max\left\{2+\ell, j\right\},\infty}([0,T]\times \overline{\Omega})}
\le CN^{-\frac{1}{d+1}}. 
\end{aligned}
\ee Putting the latter back in \eqref{iuuu}, we infer that 
\be 
\|(-\Delta)^{\frac{\ell}{2}} \pa_t^{(j)} (u \cdot \na J_{\epsilon}(\widehat{\psi}-\psi))\|_{L^2}
\le C\|u\|_{W^{\max\left\{\ell,k\right\},\infty}([0,T]\times \Omega)}N^{-\frac{1}{d+1}}.
\ee As for the last term in $u$, we reapply the Leibniz rule and make use of the convergence property \eqref{quantitativeconv} to deduce that 
\be 
\begin{aligned}
&\|(-\Delta)^{\frac{\ell}{2}} \pa_t^{(j)} (u \cdot \na (J_{\epsilon}\psi - \psi))\|_{L^2}^2
\\&\quad\quad\le C\|u\|_{W^{\max\left\{\ell, k\right\},\infty}([0,T]\times \Omega)}^2 \sum\limits_{v=0}^{j} \kappa(\epsilon)^2\|\pa_t^{(j-v)}\psi\|_{L^2}\|\l^{2\ell+2+d}\pa_t^{(j-v)}\psi \|_{L^2}
\\&\quad\quad\le C\|u\|_{W^{\max\left\{\ell, k\right\},\infty}([0,T]\times \Omega)}^2\kappa(\epsilon)^2.
\end{aligned}
\ee This shows that 
\be 
\mathcal{E}_G^i
\le C\left(1+ \|u\|_{W^{\max \left\{\ell,k\right\},\infty}([0,T]\times \Omega)}^2\right) (N^{-\frac{2}{d+1}} + \kappa(\epsilon)^2),
\ee when $\ell$ is even.

Suppose $\ell$ is odd. In view of the continuous embeddings of $\mathcal{D}(\l)$ in $H^1(\Omega)$ and $\mathcal{D}({\l^2})$ in $\mathcal{D}(\l)$, the fact that $J_{\epsilon}$ commutes with $(-\Delta)^{\frac{\ell+1}{2}}$ (because $\frac{\ell+1}{2}$ is an integer), the boundedness of $J_{\epsilon}$ in $L^2$,  we can estimate the time derivative term as follows,
\be 
\begin{aligned}
&\|\na (-\Delta)^{\frac{\ell-1}{2}} \pa_t^{(j)}\pa_t J_{\epsilon} (\widehat{\psi}-\psi)\|_{L^2}
\le C\|\l \l^{\ell -1} \pa_t^{(j+1)} J_{\epsilon} (\widehat{\psi}-\psi)\|_{L^2}
\\&\le C\|\l^2 \l^{\ell -1} \pa_t^{(j+1)} J_{\epsilon} (\widehat{\psi}-\psi)\|_{L^2}
\le C\|(-\Delta)^{\frac{\ell+1}{2}} \pa_t^{(j+1)} (\widehat{\psi}-\psi)\|_{L^2}
\\&\le \|\widehat{\psi} -\psi\|_{W^{\max\left\{\ell+1, k+1\right\}, \infty}([0,T]\times \Omega)}
\le CN^{\frac{1}{d+1}}.
\end{aligned}
\ee As shown in the even case, it holds that 
\be 
\|\na (-\Delta)^{\frac{\ell-1}{2}}\pa_t^{(j)}\pa_t (J_{\epsilon} \psi - \psi)\|_{L^2}^2
\le C\|\l^{\ell} \pa_t^{(j+1)}(J_{\epsilon}\psi - \psi)\|_{L^2}^2
\le C\kappa(\epsilon)^2. 
\ee As for the remaining terms, we employ the same idea of embedding $\mathcal{D}(\l)$ in $H^1(\Omega)$ and follow verbatim the computations implemented in the even case. We omit these details to avoid redundancy. Therefore, we obtain 
\be 
\mathcal{E}_G^i
\le C\left(1+ \|u\|_{W^{\max \left\{\ell,k\right\},\infty}([0,T]\times \Omega)}^2\right) (N^{-\frac{2}{d+1}} + \kappa(\epsilon)^2),
\ee when $\ell$ is odd.

 {\bf{Step 2. Estimates for $\mathcal{E}_G^{t}.$}} We seek good control of $\mathcal{E}_G^{t}[\ell,k;\theta]$ when $k$ is even. The case where $k$ is odd is similar and will be omitted. 
 When $k$ is even, we have 
 \be 
\begin{aligned}
\mathcal{E}_G^t
&\le C\sum\limits_{j=1}^{k} \|(-\Delta)^{\frac{\ell}{2}} (\pa_t^{(j)} (\widehat{\psi}-\psi))(x,0) \|_{L^2(\Omega)}^2
\le C \sum\limits_{j=1}^{k} \|\pa_t^{(j)} (\widehat{\psi}-\psi)(x,0)\|_{H^{\ell}(\Omega)}^2
\\&\le C\sum\limits_{j=1}^{k}\|\pa_t^{(j)}(\widehat{\psi}-\psi)\|_{H^{\ell+1}([0,T] \times \overline{\Omega})}^2
\le C\|\widehat{\psi}-\psi\|_{W^{\max\left\{\ell+1, k\right\},\infty}([0,T]\times \overline{\Omega})}^2 
\le CN^{-\frac{2}{d+1}},
\end{aligned}
\ee by the trace theorem. 
 
  {\bf{Step 3. Estimates for $\mathcal{E}_G^b.$}} By making use of the trace theorem and the fact that the solution $\psi$ vanishes on $\pa \Omega$, we have 
  \be 
\begin{aligned}
\mathcal{E}_G^b \le \|\widehat{\psi}-\psi\|_{L^2([0,T] \times \pa \Omega)}^2
\le C\|\widehat{\psi} -\psi\|_{H^1([0,T] \times \overline{\Omega})}^2
\le C\|\widehat{\psi} - \psi\|_{W^{1,\infty}([0,T]\times \overline{\Omega})}^2 \le CN^{-\frac{2}{d+1}}. 
\end{aligned}
\ee 
  
   {\bf{Step 4. Conclusion.}} Putting all these estimates together, we infer that 
   \be 
\mathcal{E}_G 
\le C\left(1+ \|u\|_{W^{\max \left\{\ell,k,1\right\},\infty}([0,T]\times \Omega)}^2\right) (N^{-\frac{2}{d+1}} + \kappa(\epsilon)^2).
   \ee 
Finally, we choose $N$ sufficiently large and $\epsilon$ sufficiently small to deduce that 
$ 
\mathcal{E}_G \le \delta.$ 
\end{proof}

\section{Total Error Estimates}\label{sec:TEE}
In this section, we derive bounds for the total error between the true solution of the fractional diffusion equation and its neural network approximation. The total error accounts for both the generalization error and the consistency of the regularization strategy introduced earlier. A key step in this analysis is to control the approximation error of the mollified neural network output in Sobolev norms. The following proposition establishes that the error in $H^k$ norm can be bounded in terms of the modified energy functional defined in Section~\ref{sec:regularizer}. This sets the stage for our main result, which shows that the total error can be controlled by the generalization error derived in Section~\ref{sec:GEE}, thereby linking training performance to the overall accuracy of the PINN approximation.

\beg{prop} \la{dom} Let $\epsilon \in (0,1)$. Let $\psi$ be the solution to \eqref{fracheat} and $\psi_{\theta}$ be a neural network approximating $\psi$. Then it holds that 
\be 
\|\psi - J_{\epsilon} \psi_{\theta}\|_{H^k([0,T] \times \Omega)}^2 \le C\mathcal{E}[k,k;\theta]^2,
\ee for some universal constant $C >0 $.
\end{prop}

\begin{proof} In view of the continuous embedding of $\mathcal{D}(\l^j)$ into $H^j(\Omega)$ and the fact that $\pa_t^{(j)}(\psi - J_{\epsilon}\psi_{\theta})$ vanishes on $\pa \Omega$, we have 
\be 
\beg{aligned}
\|\psi - J_{\epsilon} \psi_{\theta}\|_{H^k([0,T]\times \Omega)}^2
&= \sum\limits_{i=0}^{k} \sum\limits_{j=0}^{k} \|D^j \pa_t^{(i)} (\psi - J_{\epsilon}\psi_{\theta})\|_{L^2([0,T] \times \Omega)}^2
\\&\le C\sum\limits_{i=0}^{k} \sum\limits_{j=0}^{k} \|\l^j \pa_t^{(i)} (\psi - J_{\epsilon}\psi_{\theta})\|_{L^2([0,T] \times \Omega)}^2
\\&\le  C\sum\limits_{i=0}^{k} \|\l^k \pa_t^{(i)} (\psi - J_{\epsilon}\psi_{\theta})\|_{L^2([0,T] \times \Omega)}^2
\\&\le C\mathcal{E}[k,k;\theta]^2.
\end{aligned}
\ee 

\end{proof}
\beg{Thm} Let $T>0$ be an arbitrary positive time. Let $\psi_0 \in \bigcap\limits_{m \ge 0} \mathcal{D}(\l^m)$ and $\psi$ be the corresponding smooth solution to \eqref{fracheat} with homogeneous Dirichlet boundary conditions. Let $\widehat{\psi}$ be a neural network approximating $\psi$. Let $\ell$ and $k$ be nonnegative integers. Then it holds that  
\be 
\mathcal{E}[\ell, k;\theta]^2 \le C[u] \mathcal{E}_G[\ell+k, k;\theta]^2,
\ee for some constant $C[u]$ depending on $T, \ell, k$ and the $W^{\max\left\{\ell, k\right\}, \infty}$ norm of $u$. 
\end{Thm}

\begin{proof}
    We implement a proof by induction on $k$. For the base step $(k=0)$, the total error is given by 
\be 
\mathcal{E}[\ell,0;\theta] = \int_{0}^{T}\|\l^{\ell} (\psi - J_{\epsilon} \widehat{\psi})\|_{L^2}^2 dt.
\ee Subtracting the residual $\mathcal{R}_i:=\mathcal{R}_i[\theta]$ from the PDE obeyed by $\psi$, we have 
\be \la{tot1}
\pa_t (\psi - J_{\epsilon} \widehat{\psi}) + u \cdot \na (\psi - J_{\epsilon}\widehat{\psi}) + \l^{\alpha}(\psi - J_{\epsilon}\widehat\psi) = - \mathcal{R}_i.
\ee We point out that $\mathcal{R}_i \notin \mathcal{D}(\l^{\ell})$ because it does not necessarily vanish on $\pa \Omega$. Thus we cannot apply $\l^{\ell}$ to the equation \eqref{tot1} obeyed by $\psi - J_{\epsilon} \widehat{\psi}$. The remedy is to use the fact that $\l^{\ell} = (-\Delta_D)^{\frac{\ell}{2}}$ which is local when $\ell$ is even, and the fact that $\l^{\ell}  = \l \l^{\ell - 1}$ amounts to $\na (-\Delta_D)^{\frac{\ell -1}{2}} $ in $L^2$ when $\ell$ is odd. Since $\na$ and $\Delta$ are local operators, we can apply them to $\mathcal{R}_i$ as long as $\mathcal{R}_i$ is smooth enough. We use this trick to address the following two cases:

{\bf{Case 1. $\ell$ is even.}} We apply $(-\Delta)^{\frac{\ell}{2}}$ to \eqref{tot1} and obtain 
\be 
\pa_t (-\Delta)^{\frac{\ell}{2}} (\psi - J_{\epsilon} \widehat{\psi})
+ (-\Delta)^{\frac{\ell}{2}}\l^{\alpha} (\psi - J_{\epsilon}\widehat{\psi}) + (-\Delta)^{\frac{\ell}{2}}(u \cdot \na (\psi - J_{\epsilon}\widehat{\psi})) = - (-\Delta)^{\frac{\ell}{2}} \mathcal{R}_i.
\ee We multiply the latter by $(-\Delta)^{\frac{\ell}{2}}$ and integrate over $\Omega$. Since $\psi - J_{\epsilon}\widehat{\psi} \in \mathcal{D}(\l^{\ell})$ and $\l^{\alpha} (\psi - J_{\epsilon}\widehat{\psi}) \in \mathcal{D}(\l^{\ell})$, it holds that 
\be 
(-\Delta)^{\frac{\ell}{2}} (\psi- J_{\epsilon}\widehat{\psi}) = (-\Delta_D)^{\frac{\ell}{2}} (\psi - J_{\epsilon} \widehat{\psi}) = \l^{\ell} (\psi - J_{\epsilon} \widehat{\psi}),
\ee  and 
\be 
(-\Delta)^{\frac{\ell}{2}} \l^{\alpha}(\psi - J_{\epsilon}\widehat{\psi}) = (-\Delta_D)^{\frac{\ell}{2}} \l^{\alpha} (\psi - J_{\epsilon}\widehat{\psi})
= \l^{\ell} \l^{\alpha} (\psi - J_{\epsilon}\widehat{\psi}),
\ee yielding
\be 
\begin{aligned}
&\frac{1}{2} \frac{d}{dt} \|\l^{\ell} (\psi - J_{\epsilon}\widehat{\psi})\|_{L^2}^2 + \|\l^{\ell + \frac{\alpha}{2}} (\psi - J_{\epsilon} \widehat{\psi})\|_{L^2}^2 
\\&\quad\quad= - \int_{\Omega} (-\Delta)^{\frac{\ell}{2}} (u \cdot \na (\psi - J_{\epsilon}\widehat{\psi})) (-\Delta)^{\frac{\ell}{2}} (\psi - J_{\epsilon}\widehat{\psi}) - \int_{\Omega} (-\Delta)^{\frac{\ell}{2}}\mathcal{R}_i (-\Delta)^{\frac{\ell}{2}} (\psi - J_{\epsilon} \widehat{\psi}). 
\end{aligned}
\ee Denoting the commutator 
\be  
[(-\Delta)^{\frac{\ell}{2}}, u \cdot \na] (\psi - J_{\epsilon}\widehat{\psi})
:= (-\Delta)^{\frac{\ell}{2}} (u \cdot \na (\psi- J_{\epsilon}\widehat{\psi})) - u \cdot \na (-\Delta)^{\frac{\ell}{2}} (\psi - J_{\epsilon} \widehat{\psi}),
\ee and using the cancellation law 
\be 
\int_{\Omega} u \cdot \na (-\Delta)^{\frac{\ell}{2}} (\psi - J_{\epsilon}\widehat{\psi}) \cdot (-\Delta)^{\frac{\ell}{2}} (\psi - J_{\epsilon}\widehat{\psi}) = 0
\ee that holds due to the divergence-free condition obeyed by $u$, we can write the velocity term as follows,
\be 
\begin{aligned}
\mathcal{U}
&:= - \int_{\Omega}(-\Delta)^{\frac{\ell}{2}} (u \cdot \na (\psi - J_{\epsilon}\widehat{\psi})) (-\Delta)^{\frac{\ell}{2}} (\psi - J_{\epsilon}\widehat{\psi}) 
\\&= - \int_{\Omega} [(-\Delta)^{\frac{\ell}{2}}, u \cdot \na] (\psi - J_{\epsilon}\widehat{\psi}) (-\Delta)^{\frac{\ell}{2}} (\psi - J_{\epsilon}\widehat{\psi}).
\end{aligned}
\ee Using the commutator estimate 
\be 
\|[(-\Delta)^{\frac{\ell}{2}}, u \cdot \na] (\psi - J_{\epsilon}\widehat{\psi})\|_{L^2}
\le C\|u\|_{W^{\ell,\infty}} \|\psi - J_{\epsilon} \widehat{\psi}\|_{H^{\ell}},
\ee and the elliptic regularity estimate 
\be 
\|\psi - J_{\epsilon} \widehat{\psi}\|_{H^{\ell}}
\le C\|\l^{\ell}(\psi - J_{\epsilon} \widehat{\psi})  \|_{L^2}
\ee that holds due to the continuous embedding of $\mathcal{D}(\l^{\ell})$ in $H^{\ell}$, we infer that 
\be 
\mathcal{U} 
\le C\|u\|_{W^{\ell,\infty}} \|\l^{\ell}(\psi - J_{\epsilon} \widehat{\psi})\|_{L^2}^2.
\ee Thus, we obtain the differential inequality
\be 
\begin{aligned}
& \frac{d}{dt} \|\l^{\ell}(\psi - J_{\epsilon}\widehat{\psi})\|_{L^2}^2 + \|\l^{\ell + \frac{\alpha}{2}}(\psi - J_{\epsilon} \widehat{\psi})\|_{L^2}^2 
\\&\quad\quad\le  C\|(-\Delta)^{\frac{\ell}{2}} \mathcal{R}_i\|_{L^2}^2 + C(1+\|u\|_{W^{\ell,\infty}}) \|\l^{\ell}(\psi - J_{\epsilon}\widehat{\psi})\|_{L^2}^2.
\end{aligned}
\ee By Gronwall's inequality, we deduce that 
\be 
\mathcal{E}[\ell,0;\theta]^2 
\le C_T[u] \mathcal{E}_G[\ell, 0;\theta]^2,
\ee where $C_T[u]$ is a constant depending only on $T$ and the $W^{\ell,\infty}$ norm of $u$.

{\bf{Case 2. $\ell$ is odd.}} Applying $\na (-\Delta)^{\frac{\ell -1}{2}}$ to \eqref{tot1}, we have 
\be 
\begin{aligned}
&\pa_t \na (-\Delta)^{\frac{\ell -1}{2}} (\psi - J_{\epsilon}\widehat{\psi}) 
+ \na (-\Delta)^{\frac{\ell -1}{2}} \l^{\alpha} (\psi - J_{\epsilon}\widehat{\psi}) 
\\&\quad\quad+ \na (-\Delta)^{\frac{\ell -1}{2}} (u \cdot \na (\psi - J_{\epsilon}\widehat{\psi}) ) = - \na (-\Delta)^{\frac{\ell -1}{2}} \mathcal{R}_i.
\end{aligned}
\ee Taking the $L^2$ inner product of the latter with $\na (-\Delta)^{\frac{\ell -1}{2}}(\psi - J_{\epsilon}\widehat{\psi})$ gives
\be 
\begin{aligned}
&\frac{1}{2} \frac{d}{dt} \|\na (-\Delta)^{\frac{\ell -1}{2}} (\psi - J_{\epsilon}\widehat{\psi})  \|_{L^2}^2 
+ \int_{\Omega} \na (-\Delta)^{\frac{\ell -1}{2}} \l^{\alpha} (\psi - J_{\epsilon}\widehat{\psi})  \cdot \na (-\Delta)^{\frac{\ell -1}{2}} (\psi - J_{\epsilon}\widehat{\psi}) 
\\&\quad\quad= -\int_{\Omega} \na (-\Delta)^{\frac{\ell-1}{2}} (u \cdot \na (\psi - J_{\epsilon}\widehat{\psi})) \cdot \na (-\Delta)^{\frac{\ell -1}{2}} (\psi - J_{\epsilon}\widehat{\psi}) 
\\&\quad\quad\quad\quad-\int_{\Omega} \na (-\Delta)^{\frac{\ell -1}{2}}\mathcal{R}_i \cdot \na (-\Delta)^{\frac{\ell-1}{2}} (\psi - J_{\epsilon}\widehat{\psi}).
\end{aligned}
\ee Since $\psi - J_{\epsilon}\widehat{\psi} \in \mathcal{D}(\l^{\ell})$, we have 
\be 
\frac{d}{dt} \|\na (-\Delta)^{\frac{\ell-1}{2}} (\psi - J_{\epsilon} \widehat{\psi})\|_{L^2}^2
= \frac{d}{dt} \|\l \l^{\ell -1}(\psi - J_{\epsilon}\widehat{\psi}) \|_{L^2}^2 = \frac{d}{dt} \|\l^{\ell}(\psi - J_{\epsilon}\widehat{\psi}) \|_{L^2}^2.
\ee Since $\psi - J_{\epsilon}\widehat{\psi} \in \mathcal{D}(\l^k)$ for any $k \in \N$, we have $(-\Delta)^{\frac{\ell-1}{2}}\l^{\alpha}(\psi - J_{\epsilon}\widehat{\psi}) |_{\pa \Omega} = 0$, and thus we can integrate by parts to obtain the identity 
\be 
\begin{aligned}
& \int_{\Omega} \na (-\Delta)^{\frac{\ell -1}{2}} \l^{\alpha} (\psi - J_{\epsilon}\widehat{\psi})  \cdot \na (-\Delta)^{\frac{\ell -1}{2}} (\psi - J_{\epsilon}\widehat{\psi})
\\&\quad\quad= - \int_{\Omega} (-\Delta)^{\frac{\ell-1}{2}} \l^{\alpha} (\psi - J_{\epsilon}\widehat{\psi})  \Delta (-\Delta)^{\frac{\ell-1}{2}} (\psi - J_{\epsilon}\widehat{\psi}) 
\\&\quad\quad= \int_{\Omega} \l^{\ell -1} \l^{\alpha} (\psi - J_{\epsilon}\widehat{\psi}) \l^{2}\l^{{\ell-1}}(\psi - J_{\epsilon}\widehat{\psi}) 
\\&\quad\quad=\|\l^{\ell + \frac{\alpha}{2}}(\psi - J_{\epsilon}\widehat{\psi}) \|_{L^2}^2.
\end{aligned}
\ee As for the $u$-term, we employ a similar technique to the case when $\ell$ is even and make use of commutators to derive the following estimates,
\be 
\begin{aligned}
&-\int_{\Omega} \na (-\Delta)^{\frac{\ell-1}{2}} (u \cdot \na (\psi - J_{\epsilon}\widehat{\psi})) \cdot \na (-\Delta)^{\frac{\ell -1}{2}} (\psi - J_{\epsilon}\widehat{\psi}) 
\\&\quad\quad= - \int_{\Omega} [\na (-\Delta)^{\frac{\ell-1}{2}}, u \cdot \na]  (\psi - J_{\epsilon}\widehat{\psi}) \cdot \na (-\Delta)^{\frac{\ell -1}{2}}(\psi - J_{\epsilon}\widehat{\psi}) 
\\&\quad\quad\le C\|u\|_{W^{\ell,\infty}} \|\psi - J_{\epsilon}\widehat{\psi}\|_{H^{\ell}}^2 \le C\|u\|_{H^{\ell}} \|\l^{\ell}(\psi - J_{\epsilon}\widehat{\psi})\|_{L^2}^2.
\end{aligned}
\ee Combining these estimates and applying Gronwall's inequality, we infer that 
\be 
\mathcal{E}[\ell,0;\theta]^2 
\le C_T[u]\mathcal{E}_G[\ell, 0;\theta]^2,
\ee when $\ell$ is odd. Here $C_T[u]$ is a constant depending only on $T$ and the $W^{\ell,\infty}$ norm of $u$. Now we assume that there exists a constant $C_{T,k-1}[u]$ that depends only on $T$, $k$, and $\|u\|_{W^{\max\left\{\ell,k-1\right\}, \infty}([0,T]\times \Omega)}^2$  such that 
\be 
\mathcal{E}[\ell, k-1;\theta]^2
\le C_{T,k-1}[u] \mathcal{E}_G[\ell+k-1, k-1;\theta]^2,
\ee for all $\ell \in \N$. We prove that 
\be 
\mathcal{E}[\ell, k;\theta]^2 \le C_{T, k}[u] \mathcal{E}_G[\ell+k, k;\theta]^2
\ee holds for all $\ell \in \N$. To this end, we fix $\ell \in \N$ and distinguish two cases:

{\bf{Case 1. $\ell$ is even.}} In this case, we have 
\be 
\begin{aligned}
&\pa_t^{(k+1)} (-\Delta)^{\frac{\ell}{2}} (\psi - J_{\epsilon} \widehat{\psi})
+ \pa_t^{(k)} (-\Delta)^{\frac{\ell}{2}} \Lambda^{\alpha}(\psi - J_{\epsilon} \widehat{\psi})
\\&\quad\quad= -\pa_t^{(k)} (-\Delta)^{\frac{\ell}{2}} \mathcal{R}_i
- \pa_t^{(k)} (-\Delta)^{\frac{\ell}{2}} (u \cdot \na (\psi - J_{\epsilon} \widehat{\psi})).
\end{aligned}
\ee We point out that $\pa_t^{(k)}, \na$, and $(-\Delta)^{\frac{\ell}{2}}$ are local operators and commute with each other. Multiplying the latter by $\pa_t^{(k)} (-\Delta)^{\frac{\ell}{2}} (\psi - J_{\epsilon} \widehat{\psi})$ and integrating over $\Omega$ produce  
\be 
\begin{aligned}
&\frac{1}{2} \frac{d}{dt} \|\l^{\ell} \pa_t^{(k)} (\psi - J_{\epsilon} \widehat{\psi})\|_{L^2}^2
+ \|\l^{\ell + \frac{\alpha}{2}} \pa_t^{(k)} (\psi - J_{\epsilon} \widehat{\psi})\|_{L^2}^2
\\&\quad\quad= -\int_{\Omega} \pa_t^{(k)} (-\Delta)^{\frac{\ell}{2}} \mathcal{R}_i \pa_t^{(k)} (-\Delta)^{\frac{\ell}{2}} (\psi - J_{\epsilon} \widehat{\psi})
\\&\quad\quad\quad\quad-\int_{\Omega} \pa_t^{(k)} (-\Delta)^{\frac{\ell}{2}} (u \cdot \na (\psi - J_{\epsilon} \widehat{\psi})) \pa_t^{(k)} (-\Delta)^{\frac{\ell}{2}}(\psi - J_{\epsilon} \widehat{\psi}). 
\end{aligned}
\ee By the Leibnitz Theorem, we decompose the $u$-term as follows,
\be 
\begin{aligned}
&-\int_{\Omega} \pa_t^{(k)} (-\Delta)^{\frac{\ell}{2}} (u \cdot \na (\psi - J_{\epsilon} \widehat{\psi})) \pa_t^{(k)} (-\Delta)^{\frac{\ell}{2}}(\psi - J_{\epsilon} \widehat{\psi})
\\&\quad\quad= -\sum\limits_{i=0}^{k} \binom{k}{i} \int_{\Omega} (-\Delta)^{\frac{\ell}{2}} \left[\pa_t^{(i)} u \cdot \na \pa_t^{(k-i)} (\psi - J_{\epsilon}\widehat{\psi}) \right] (-\Delta)^{\frac{\ell}{2}} \pa_t^{(k)} (\psi - J_{\epsilon}\widehat{\psi}) 
\\&\quad\quad= -\sum\limits_{i=1}^{k} \binom{k}{i} \int_{\Omega} (-\Delta)^{\frac{\ell}{2}} \left[\pa_t^{(i)} u \cdot \na \pa_t^{(k-i)} (\psi - J_{\epsilon}\widehat{\psi}) \right] (-\Delta)^{\frac{\ell}{2}} \pa_t^{(k)} (\psi - J_{\epsilon}\widehat{\psi}) 
\\&\quad\quad\quad\quad-\int_{\Omega} (-\Delta)^{\frac{\ell}{2}} \left[u \cdot \na \pa_t^{(k)} (\psi-J_{\epsilon}\widehat{\psi}) \right] (-\Delta)^{\frac{\ell}{2}} \pa_t^{(k)} (\psi- J_{\epsilon}\widehat{\psi})
\\&\quad\quad:= \mathcal{U}_1 + \mathcal{U}_2.
\end{aligned}
\ee 
Using Sobolev product estimates and the continuous embedding of $\mathcal{D}(\l^{m})$ into $H^m$ for any $m \in \N$, we estimate
\be 
\begin{aligned}
\mathcal{U}_1
&\le C\sum\limits_{i=1}^{k} \|\pa_t^{(i)} u\|_{W^{\ell, \infty}} \|\pa_t^{(k-i)} (\psi - J_{\epsilon} \widehat{\psi})\|_{H^{\ell + 1}}  \|\pa_t^{(k)} (\psi-J_{\epsilon}\widehat{\psi})\|_{H^{\ell}}
\\&\le C\sum\limits_{i=1}^{k} \|\pa_t^{(i)} u\|_{W^{\ell, \infty}}^2 \|\l^{\ell +1} \pa_t^{(k-i)} (\psi - J_{\epsilon} \widehat{\psi})\|_{L^{2}}^2 
+ \|\l^{\ell} \pa_t^{(k)} (\psi-J_{\epsilon}\widehat{\psi})\|_{L^2}^2. 
\end{aligned}
\ee We point out that 
\be 
\begin{aligned}
&\int_{0}^{T} \sum\limits_{i=1}^{k} \|\pa_t^{(i)} u\|_{W^{\ell, \infty}}^2 \|\l^{\ell +1} \pa_t^{(k-i)} (\psi - J_{\epsilon} \widehat{\psi})\|_{L^{2}}^2 dt
\\&\le C\|u\|_{W^{\max\left\{\ell,k\right\}, \infty}([0,T]\times \Omega)}^2 \int_{0}^{T}\sum\limits_{i=1}^{k}  \|\l^{\ell +1} \pa_t^{(k-i)} (\psi - J_{\epsilon} \widehat{\psi})\|_{L^{2}}^2 dt
\\&\le C\|u\|_{W^{\max\left\{\ell,k\right\}, \infty}([0,T]\times \Omega)}^2 \mathcal{E} [\ell+1, k-1;\theta]^2
\\&\le C\|u\|_{W^{\max\left\{\ell,k\right\}, \infty}([0,T]\times \Omega)}^2 \mathcal{E}_G [\ell+k, k-1;\theta]^2,
\end{aligned}
\ee by the induction hypothesis. 
As for the term $\mathcal{U}_2$, we make use of the cancellation law 
\be 
\int_{\Omega} u \cdot \na [(-\Delta)^{\frac{\ell}{2}} \pa_t^{k} (\psi - J_{\epsilon}\widehat{\psi})](-\Delta)^{\frac{\ell}{2}} \pa_t^{k} (\psi - J_{\epsilon}\widehat{\psi})  = 0,
\ee to write $\mathcal{U}_2$ in terms of the commutator $[(-\Delta)^{\frac{\ell}{2}}, u \cdot \na]$ as follows, 
\be 
\begin{aligned}
\mathcal{U}_2 &= - \int_{\Omega} [(-\Delta)^{\frac{\ell}{2}}, u \cdot \na] \pa_t^{(k)} (\psi - J_{\epsilon}\widehat{\psi}) (-\Delta)^{\frac{\ell}{2}} \pa_t^{(k)}(\psi - J_{\epsilon}\widehat{\psi}), 
\end{aligned}
\ee and then we make use of commutator estimates to bound $\mathcal{U}_2$ by 
\be 
\mathcal{U}_2 
\le C\|u\|_{W^{\ell, \infty}} \|\l^{\ell} \pa_t^{(k)} (\psi - J_{\epsilon} \widehat{\psi})\|_{L^2}^2.
\ee Putting all these estimates together and applying Gronwall's inequality yield
\be 
\mathcal{E}[\ell, k; \theta]^2
\le C_{T,k}[u] \mathcal{E}_G[\ell+k, k;\theta]^2,
\ee for some positive constant $C_{T,k}[u]$ depending only on $T, k, \ell$ and the $W^{\max\left\{\ell, k\right\},\infty}([0,T]\times \Omega)$ norm of $u$.

{\bf{Case 2. $\ell$ is odd.}} This case is similar to the previous cases. The proof will be omitted to avoid redundancy. 
\end{proof} 

\section{Concluding Remarks}\label{sec:conclusion}
In this paper, we established a convergence theory for PINNs applied to fractional diffusion equations posed on bounded domains with spectral Dirichlet boundary conditions. By introducing a spectrally-defined mollification strategy, we ensured boundary compatibility and derived rigorous error estimates in standard Sobolev norms. Our analysis is grounded in tools from PDE theory and functional analysis, rather than numerical discretization schemes, and contributes to the growing body of analytical work on PINNs, particularly in nonlocal settings where classical methods face structural challenges.

The mollification strategy employed in this paper allows approximation of solutions by neural networks in any space-time Sobolev space. Such approximations fail when cutoff functions are used instead. In fact, multiplying the neural network by cutoffs $\chi_{\epsilon}$ yields vanishing on the boundary but gives rise to the need for proper estimation of the differences $\Lambda_{D}^{\alpha} (\chi_{\epsilon} \psi - \psi)$ in Sobolev spaces. More precisely, one needs the distance between $\chi_{\epsilon} \psi$ and $\psi$ to be sufficiently small in any $\mathcal{D}(\l^j)$ space. This is obviously achievable in $L^2$ but not necessarily in $\mathcal{D}(\Lambda^{j})$. For instance, when $j =1$, it holds that
          \be
          \|\Lambda_{D} (\chi_{\epsilon} \psi - \psi)\|_{L^2} = \|\na ((\chi_{\epsilon} - 1)\psi)\|_{L^2}^2.
          \ee But when $\epsilon$ approaches 0, the gradient of $\chi_{\epsilon}$ blows up, and thus the existence of a small $\epsilon$ for which the latter norm is sufficiently small is not clear. In contrast, we know that $J_{\epsilon}\psi$ converges to $\psi$ in $\mathcal{D}(\Lambda^{j})$, prioritizing the use of $J_{\epsilon}$ over $\chi_{\epsilon}$.


In future work, it would be of interest to extend this framework to a wider class of partial differential equations involving some nonlinear aspects and different types of nonlocal operators. 


\section*{Acknowledgements}
L.C. was partially supported by the National Key R\&D Program of China No. 2021YFA1003001 and the NSFC grant No. 12271537. R.H. was partially supported by the ONR grant N00014-24-1-2432, the Simons Foundation (MP-TSM-00002783), and the NSF grant DMS-2420988.

\appendix

\section{Existence and Uniqueness of Global Smooth Solutions} \la{exandun}

\beg{Thm} \la{exiun} Let $\alpha \in [0,2]$. Let $T > 0$. Let $u$ be a smooth divergence-free vector field such that $u \cdot n|_{\pa \Omega} = 0$. Let $f$ be a given smooth function such that $(-\Delta)^k f|_{\pa \Omega} = 0$ for any $k \in \N$. Let $ \psi_0 \in \bigcap_{k=0}^{\infty} \mathcal{D}(\l^k)$. Then the advetion-diffusion equation 
\be \label{readif}
\pa_t \psi + u \cdot \na \psi + \l^{\alpha}\psi = f,
\ee 
equipped with homogeneous Dirichlet boundary conditions and initial data $\psi_0$ has a unique solution $\psi$ obeying
\be 
\psi \in L^{\infty}(0,T; \mathcal{D}(\l^k)),
\ee for all $k \in \N$.
\end{Thm}

\begin{proof}
For $\epsilon \in (0,1)$, we consider the regularized equations 
\be 
\pa_t \psi^{\epsilon} 
+ u \cdot \na \psi^{\epsilon}
+ \l^{\alpha} \psi^{\epsilon}
- \epsilon \Delta \psi^{\epsilon}
= f,
\ee with initial data 
\be 
\psi^{\epsilon}|_{\pa \Omega} = 0,
\ee and boundary conditions
\be 
\psi^{\epsilon}(x,0)= \psi_0(x).
\ee 
These systems have global smooth solutions 
\be 
\psi^{\epsilon} \in L^{\infty} (0,T; \mathcal{D}(\l^k)),
\ee for any $k \in \N$, a fact that follows from a classical Galerkin approximation scheme and passage to the limit via compactness arguments (see, for instance \cite{abdo2024regularity}). We establish bounds for solutions to these regularized systems that are independent of $\epsilon$ and deduce the existence of solutions 
\be 
\psi \in L^{\infty}(0,T; \mathcal{D}(\l^k)),
\ee for any $k \in \N$. In fact, the $\mathcal{D}(\l^k)$ norm of $\psi^{\epsilon}$  evolves according to 
\be 
\frac{1}{2} \frac{d}{dt}\|\l^k \psi^{\epsilon}\|_{L^2}^2
+ \|\l^{k+\frac{\alpha}{2}}\psi^{\epsilon}\|_{L^2}^2
= - \int_{\Omega} \l^k (u \cdot \na \psi^{\epsilon}) \l^k \psi^{\epsilon} dx 
+ \int_{\Omega} \l^k f \l^k \psi^{\epsilon} dx.
\ee We point out that $u \cdot \na \psi^{\epsilon} \in \mathcal{D}(\l^k)$ for all $k \in \N$ because 
\be 
u \cdot \na \psi^{\epsilon}
= -\pa_t \psi^{\epsilon} - \l^{\alpha}\psi^{\epsilon} 
+ \epsilon \Delta \psi^{\epsilon} + f,
\ee where the right-hand side belongs to $\mathcal{D}(\l^k)$ for all $k \in \N$. 

In order to estimate the term in $u$, we use the cancellation laws
\be \la{cance1}
\int_{\Omega} u \cdot \na \l^{k}\psi^{\epsilon}  \l^{k} \psi^{\epsilon} dx = 0,
\ee when $k$ is even, and 
\be \la{cance2}
\int_{\Omega} u \cdot \na \na \l^{k-1} \cdot \na \l^{k-1}\psi^{\epsilon}  dx = 0,
\ee when $k$ is odd, and we obtain 
\be 
- \int_{\Omega} \l^k (u \cdot \na \psi^{\epsilon}) \l^k \psi^{\epsilon} dx 
= -\int_{\Omega} [\l^k, u \cdot \na]\psi^{\epsilon} \l^{k}\psi^{\epsilon} dx 
\le C\|u\|_{W^{k,\infty}}\|\l^k \psi^{\epsilon}\|_{L^2}^2,
\ee when $k$ is even 
and 
\be 
- \int_{\Omega} \l^k (u \cdot \na \psi^{\epsilon}) \l^k \psi^{\epsilon} dx 
= -\int_{\Omega} [\na \l^{k-1}, u \cdot \na]\psi^{\epsilon} \na \l^{k-1}\psi^{\epsilon} dx 
\le C\|u\|_{W^{k,\infty}}\|\l^k \psi^{\epsilon}\|_{L^2}^2,
\ee when $k$ is odd. The last two estimates follow from expanding the commutators, applying H\"older's inequality, and using the identity $- \na \cdot \na = -\Delta = \l^2$ in the odd case. The cancellations \eqref{cance1} and \eqref{cance2} hold due to the divergence-free property obeyed by $u$ and the boundary assumption $u \cdot n|_{\pa \Omega} = 0$. The need for this property justifies the parabolic regularization scheme that preserves transport by divergence-free vector fields, in contrast to Galerkin approximations that destroy the aforementioned structure. 
Finally, we apply Gronwall's inequality and infer that 
\be 
\begin{aligned}
&\sup\limits_{0 \le t \le T} \|\l^k \psi^{\epsilon}(t)\|_{L^2}^2
\\&\quad\quad\le \left(\|\l^{k}\psi_0\|_{L^2}^2 + C \int_{0}^{T} \|\l^{k- \frac{\alpha}{2}}f\|_{L^2}^2 dt \right) \exp \left\{C\int_{0}^{T} \|u\|_{W^{k,\infty}} dt  \right\}.
\end{aligned}
\ee We omit further details. 
\end{proof}

\begin{rem}
We point out that the spatial smoothness of solutions derived in Theorem \ref{exiun} yields their time smoothness in view of the PDE \eqref{readif}. As a consequence, the unique solution $\psi$ is $C^{\infty}$ in both space and time. 
\end{rem}

\bibliographystyle{siamplain}
\bibliography{references}

\end{document}